\documentclass[12pt,nosumlimits,nonamelimits]{amsart}
\usepackage{amssymb,mathtools,accents}

\topmargin 4mm
\evensidemargin 2mm
\oddsidemargin 2mm
\textwidth 162mm
\textheight 211mm

\newtheorem{thm}{Theorem}
\newtheorem{prop}[thm]{Proposition}
\newtheorem{lem}[thm]{Lemma}
\theoremstyle{remark}

\theoremstyle{definition}
\newtheorem*{defn}{Definition}
\newcommand{\IB}{\mathbb{B}}
\newcommand{\IC}{\mathbb{C}}
\newcommand{\IK}{\mathbb{K}}

\newcommand{\IN}{\mathbb{N}}
\newcommand{\IP}{\mathbb{P}}
\newcommand{\IR}{\mathbb{R}}
\newcommand{\IT}{\mathbb{T}}

\newcommand{\bF}{\mathbf{F}}

\newcommand{\cC}{\mathcal{C}}

\newcommand{\cH}{\mathcal{H}}

\newcommand{\cO}{\mathcal{O}}

\newcommand{\cT}{\mathcal{T}}
\newcommand{\cU}{\mathcal{U}}
\newcommand{\cZ}{\mathcal{Z}}

\newcommand{\ve}{\varepsilon}
\newcommand{\vp}{\varphi}
\newcommand{\Ga}{\Gamma}
\newcommand{\La}{\Lambda}
\newcommand{\Tree}{\mathrm{T}}

\newcommand{\acts}{\curvearrowright}
\newcommand{\id}{\mathrm{id}}

\DeclareMathOperator{\rank}{rank}
\DeclareMathOperator{\supp}{supp}
\newcommand{\cspan}{\mathop{\overline{\mathrm{span}}}}
\newcommand{\ip}[1]{\mathopen{\langle}#1\mathclose{\rangle}}
\def\bigast{\font\bigsymbolsfont=cmsy10 scaled \magstep3
 \setbox0=\hbox{\bigsymbolsfont\char'003 }\mathord{\lower1pt\box0}}\relax\ignorespaces
\hyphenation{acylin-dri-cally}
\title{Proximality and selflessness for group C*-alge\-bras}
\author{Narutaka Ozawa}
\address{RIMS, Kyoto University, \mbox{606-8502} Japan}
\email{narutaka@kurims.kyoto-u.ac.jp}
\thanks{The author was partially supported by JSPS KAKENHI Grant Numbers 24K00527, 25H00588, 25H00593}
\subjclass{Primary 37B05; Secondary 46L35}

\keywords{Extremely proximal actions, extreme boundaries, selfless $\mathrm{C}^*$-alge\-bras}
\date{\today}

\begin{document}
\begin{abstract}
We prove that the reduced group $\mathrm{C}^*$-alge\-bras 
of infinite countable discrete groups having topologically-free 
extreme boundaries, or more generally groups that satisfy 
certain combinatorial property including 
all acylindrically hyperbolic groups with no nontrivial finite normal subgroups 
and all Zariski-dense subgroups of $\mathrm{PSL}(n,\mathbb{R})$, 
are selfless in the sense of L. Robert. 
This generalizes the recent results of 
Amrutam, Gao, Kunnawalkam Elayavalli, and Patchell, and of Vigdorovich.
We also prove that selflessness is stable under 
tensor product among exact $\mathrm{C}^*$-alge\-bras 
and that a $\mathrm{C}^*$-prob\-a\-bility space is selfless 
provided that it is either simple and purely infinite or simple, 
exact, $\mathcal{Z}$-stable, and uniquely tracial. 
\end{abstract}
\maketitle
\section{Introduction}
A remarkable property called \emph{selflessness} is recently 
introduced for a $\mathrm{C}^*$-alge\-bra by L. Robert (\cite{robert}) 
and it quickly attracted a number of researchers' attention 
(see, e.g., \cite{agkep}, \cite{hker}, \cite{rtv}, \cite{vigdorovich})
as it implies many important regularity properties such as 
simplicity, (in the tracial setting) stable rank one, 
and strict comparison. 
In particular,  Amrutam, Gao, Kunnawalkam Elayavalli, 
and Patchell (\cite{agkep}) has proved that a large family 
of groups, namely acylindrically hyperbolic groups 
with the \emph{rapid decay property} and with no nontrivial 
finite normal subgroups are \emph{$\mathrm{C}^*$-selfless}. 
Here, we say a group $\Ga$ is $\mathrm{C}^*$-selfless if it 
its reduced group $\mathrm{C}^*$-alge\-bra 
$\mathrm{C}^*_\lambda(\Ga)$ is selfless. 

In this paper, we prove that every infinite countable 
discrete group with a minimal topologically-free 
extremely-proximal action is $\mathrm{C}^*$-selfless. 
Our method is topological, as opposed to the more 
analytical one in \cite{agkep}, and to construct 
a kind of ``tree-graded'' space (\cite{ds}) out of an extreme 
boundary (see Section~\ref{sec:space}). 
Note that every $\mathrm{C}^*$-selfless group 
is $\mathrm{C}^*$-simple (\cite{robert}) and 
whether the converse is also true is 
an open problem (Problem XCI in \cite{stw}).
We remind Kalantar and Kennedy's theorem (\cite{kk}) that 
a discrete group is $\mathrm{C}^*$-simple if and only if 
it has a minimal topological-free strongly-proximal action. 
As the names suggest, strong proximality is weaker than 
extreme proximality (\cite{glasner}). 

\begin{defn}
Let $\Ga$ be a countable discrete group and $\Ga\acts X$ 
be an action on a compact topological space $X$. 
The action $\Ga\acts X$ (or the $\Ga$-space $X$) 
is called an \emph{extreme boundary} 
if it is minimal and \emph{extremely proximal} (\cite{glasner}) 
in the sense that for every non-empty 
open subsets $U$ and $V$ of $X$, there is 
$g\in\Ga$ such that $g(X\setminus U)\subset V$.

For $x\in X$, we denote by $\Ga_x \coloneq \{ g\in\Ga : gx=x \}$ 
the stabilizer subgroup at $x$. 
The action $\Ga\acts X$ is said to be 
\emph{topologically-free} if the set of points 
with trivial stabilizer groups is dense in $X$. 
Note that if an extreme boundary $\Ga\acts X$ 
is not topologically-free, then every proper 
closed subset of $X$ is pointwise fixed by some 
$g\in\Ga\setminus\{1\}$. 

We say a sequence $(z_n)_n$ in $\Ga$ is \emph{axial} if 
there is a topologically-free extreme boundary 
$\Ga\acts X$ with distinct points $z_\pm\in X$ 
that satisfies the following two conditions.
(1):~For every neighborhoods $U_\pm$ of $z_\pm$ 
one has 
$z_n (X\setminus U_-) \subset U_+$
eventually, or equivalently that 
$z_n^{-1}( X \setminus U_+ ) \subset U_-$ eventually. 
(2):~The $\Ga$-action on $\{z_\pm\}$ is free 
in the sense that $g\{z_\pm\}\cap\{z_\pm\}\neq\emptyset$ 
implies $g=1$. 
Note that a second countable extreme boundary $X$ 
with $|X|>2$ accommodates an axial sequence if and only if 
it is topologically-free. 
\end{defn}

\begin{thm}\label{thm:main}
An infinite countable discrete group $\Ga$ having 
a topologically-free extreme boundary is 
$\mathrm{C}^*$-selfless. 
More precisely, for any axial sequence $(z_n)_n$ in $\Ga$ 
and any free ultrafilter $\cU$ on $\IN$, the homomorphism 
\[
\Ga*\ip{z}\to\mathrm{C}^*_\lambda(\Ga)^\cU,
\]
given by the diagonal embedding of 
$\Ga$ into $\mathrm{C}^*_\lambda(\Ga)^\cU$ 
and $z\mapsto [z_n]_n \in \mathrm{C}^*_\lambda(\Ga)^\cU$, 
induces a faithful embedding of the reduced 
group $\mathrm{C}^*$-alge\-bra $\mathrm{C}^*_\lambda(\Ga*\ip{z})$ 
into $\mathrm{C}^*_\lambda(\Ga)^\cU$.
\end{thm}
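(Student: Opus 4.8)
The plan is to identify the C*-subalgebra $E\subset\mathrm{C}^*_\lambda(\Ga)^\cU$ generated by the diagonal copy of $\mathrm{C}^*_\lambda(\Ga)$ and the unitary $U\coloneq[\lambda(z_n)]_n$ with the reduced free product $(\mathrm{C}^*_\lambda(\Ga),\tau_\Ga)*(\mathrm{C}^*_\lambda(\ip{z}),\tau_0)$ of the reduced group C*-algebras with their canonical traces, which is canonically $\mathrm{C}^*_\lambda(\Ga*\ip{z})$. Write $\tau_\cU$ for the tracial state $[a_n]_n\mapsto\lim_{n\to\cU}\tau_\Ga(a_n)$ on $\mathrm{C}^*_\lambda(\Ga)^\cU$; it restricts to $\tau_\Ga$ on the diagonal, and, since the axial dynamics forces $z_n$ to have infinite order for all large $n$, it sends $U^k$ to $\delta_{k,0}$, so $z\mapsto U$ extends to a trace-preserving embedding of $(\mathrm{C}^*_\lambda(\ip{z}),\tau_0)$. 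Everything will follow once the two copies are shown to be \emph{free} with respect to $\tau_\cU$; the one genuinely non-formal point is that $\tau_\cU$ is far from faithful on the ambient ultrapower.

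The dynamical heart is a ping-pong argument: for every non-trivial reduced word $w$ in $\Ga*\ip{z}$, the element $w[z_n]\in\Ga$ obtained by the substitution $z\mapsto z_n$ is non-trivial for all sufficiently large $n$. A non-trivial word with no occurrence of $z$ maps to itself in $\Ga$ and is done; and since conjugation alters neither the conclusion nor the hypotheses, an induction on length lets us assume the normal form $w=g_1z^{k_1}\cdots g_mz^{k_m}$ with all $g_i\neq1$ and $k_i\neq0$. Fix a topologically-free extreme boundary $\Ga\acts X$ and points $z_\pm$ witnessing axiality of $(z_n)_n$. Using that the $\Ga$-action on $\{z_\pm\}$ is free, choose by continuity disjoint open neighbourhoods $U_\pm\ni z_\pm$ with disjoint closures and small enough that $g_i^{\pm1}(\overline{U_+}\cup\overline{U_-})$ is disjoint from $\overline{U_+}\cup\overline{U_-}$ for each $i$; then, iterating condition~(1) of the definition of an axial sequence (the $U_\pm$ being disjoint), for $n$ large we have $z_n^{\pm k}(X\setminus U_\mp)\subset U_\pm$ for all $1\le k\le\max_i|k_i|$. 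Pushing $z_+$ (resp.\ $z_-$) through the letters of $w^{-1}$ one at a time and recording which of $U_+$, $U_-$ the running image lies in, an immediate induction gives $w^{-1}(z_+)\in U_-$ when $k_m>0$ and $w^{-1}(z_-)\in U_+$ when $k_m<0$; in either case the image lies outside the neighbourhood of the starting point, whence $w[z_n]\neq1$. This is the ping-pong lemma adapted to a sequence with north--south-type behaviour.

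Granting the claim, freeness follows at once: for $a_i\in\mathrm{C}^*_\lambda(\Ga)$ with $\tau_\Ga(a_i)=0$ and $b_i\in\mathrm{C}^*_\lambda(\ip{z})$ with $\tau_0(b_i)=0$, expanding a reduced alternating product of the $a_i$'s and $b_i$'s (read inside the ultrapower) over group elements writes it as a linear combination of words in normal form, and the $\tau_\Ga$-value of any such word with $z=z_n$ is $1$ or $0$ according as that word collapses to the identity of $\Ga$, hence is $0$ for $\cU$-almost every $n$; thus the diagonal $\mathrm{C}^*_\lambda(\Ga)$ and $\mathrm{C}^*(U)$ are free in $(\mathrm{C}^*_\lambda(\Ga)^\cU,\tau_\cU)$. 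Since $\tau_\Ga$ and $\tau_0$ have faithful GNS representations, $\mathrm{C}^*_\lambda(\ip{z})$ contains a Haar unitary, and $\mathrm{C}^*_\lambda(\Ga)$ a unitary of trace $0$, the theory of reduced free products --- in the form (due to Avitzour and Dykema) that applies when the ambient state need not be faithful --- identifies $E$, equipped with $\tau_\cU$, with $(\mathrm{C}^*_\lambda(\Ga),\tau_\Ga)*(\mathrm{C}^*_\lambda(\ip{z}),\tau_0)=\mathrm{C}^*_\lambda(\Ga*\ip{z})$ via the $*$-homomorphism extending the diagonal embedding and $z\mapsto U$. This is exactly the asserted faithful embedding, and $\mathrm{C}^*$-selflessness of $\Ga$ then follows by Robert's criterion (\cite{robert}), $\mathrm{C}^*_\lambda(\Ga)$ being non-elementary since $\Ga$ is infinite. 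I expect the ping-pong step to be the main obstacle: the axiality hypotheses are weak --- the north--south behaviour is asserted only along $(z_n)_n$, and freeness only on a two-point set --- so one must choose the neighbourhoods uniformly over the finitely many group elements of a fixed word and carry out the reduction to normal form carefully, whereas the non-faithfulness of $\tau_\cU$, the other apparent difficulty, is exactly what the Haar-unitary refinement of the free-product theorem is meant to absorb.
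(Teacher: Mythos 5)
Your ping-pong step and the resulting trace computation are essentially correct and correspond to Lemma~\ref{lem:pingpong} of the paper: they show that $\tau_\cU$ restricted to the subalgebra $E$ generated by the diagonal copy of $\mathrm{C}^*_\lambda(\Ga)$ and $U=[\lambda(z_n)]_n$ pulls back to the canonical trace of $\Ga*\ip{z}$, so that the GNS representation of $(E,\tau_\cU)$ is the left regular representation of $\Ga*\ip{z}$ and hence $\|\Phi(x)\|\geq\|\lambda_{\Ga*\ip{z}}(x)\|$ for $x$ in the group algebra. The gap is in your final step. Freeness of two GNS-faithful subalgebras with respect to a state that is \emph{not} GNS-faithful on the ambient algebra only produces a surjection $E\twoheadrightarrow \mathrm{C}^*_\lambda(\Ga*\ip{z})$ (namely the GNS representation of $\tau_\cU|_E$); it does not identify $E$ itself with the reduced free product, which is what the theorem asserts. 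There is no ``Avitzour--Dykema theorem for non-GNS-faithful ambient states'' of the kind you invoke: the full group $\mathrm{C}^*$-algebra $\mathrm{C}^*(\bF_2)$ with its canonical trace is generated by two free copies of $(C(\IT),\text{Haar})$, each containing a Haar unitary, and yet is not the reduced free product. The Haar-unitary results of Avitzour and Dykema give simplicity and uniqueness of trace \emph{of the reduced free product}; they would upgrade your surjection to an isomorphism only if you already knew $E$ were simple, which is not known at this stage.

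What is missing is precisely the reverse inequality $\|\Phi(x)\|\leq\|\lambda_{\Ga*\ip{z}}(x)\|$, i.e., that the representation of $\Ga*\ip{z}$ on $(\ell_2\Ga)^\cU$ is weakly contained in its left regular representation; this is the entire content of the paper's argument. There one builds inside $C(X)^\cU$ a $\tilde{\Ga}$-equivariant ``tree-graded'' compact space $X_\Tree$ fibred (away from finitely many orbits) over the compactified Bass--Serre tree $\bar{\Tree}$ of $\Ga*\ip{z}$ (Section~\ref{sec:space}), extracts from it a $\tilde{\Ga}$-equivariant representation of $C(\bar{\Tree})$ on the same Hilbert space, and then applies Theorem~\ref{thm:repn}, which reduces weak containment for a group acting on a tree to weak containment on vertex stabilizers --- here the conjugates of $\Ga$, where continuity on $\mathrm{C}^*_\lambda(\Ga)$ holds because the embedding is diagonal. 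Your proposal would need to supply an argument of this kind (or some other proof of weak containment, e.g.\ a Powers-type averaging) for the hard direction; as written it establishes only the easy half of the norm equality.
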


See, e.g., \cite{bowditch}, \cite{bio}, \cite{flmms}, \cite{io}, 
\cite{jr}, \cite{ls}, \cite{lbmb} for examples of extreme 
boundaries. Every group satisfying the conclusion 
of Theorem~\ref{thm:main} is \emph{mixed-identity-free} 
(see \cite{ho} for the terminology) 
and converse holds true if $\Ga$ has a faithful minimal action 
of general type on a tree (\cite{flmms}), since 
such an action gives rise to an extreme boundary. 
In particular, non-elementary free product groups 
are $\mathrm{C}^*$-selfless. 
For the case of amalgamated free products and 
HNN extensions, see \cite{flmms} and \cite{io} for 
a characterization of topological freeness of 
the corresponding Bass--Serre tree compactifications. 
Non-elementarily relatively hyperbolic 
groups with no nontrivial finite normal 
subgroups are $\mathrm{C}^*$-selfless, as their Bowditch 
compactifications (\cite{bowditch}) are topologically-free 
extreme boundaries. 
It is not clear whether this generalizes to 
acylindrically hyperbolic groups (with no assumption 
on the rapid decay property, cf.\ \cite{agkep}), because their hyperbolic 
structure need not yield suitable compactifications. 
We fix this problem in Section~\ref{sec:pow} and prove 
that a group satisfying property $\mathrm{P}_{\mathrm{PHP}}$ 
(defined there) is $\mathrm{C}^*$-selfless. The class of groups 
with property $\mathrm{P}_{\mathrm{PHP}}$ contains 
all acylindrically hyperbolic groups with no nontrivial 
finite normal subgroups as well as all Zariski-dense subgroups 
of $\mathrm{PSL}(d\geq2,\IR)$ (cf.\ \cite{vigdorovich}). 

There are $\mathrm{C}^*$-selfless groups outside 
the mixed-identity-free realm. 
\begin{thm}\label{thm:tensor}
Let $(A_i,\vp_i)$ be separable $\mathrm{C}^*$-prob\-a\-bility spaces.
Assume that all $(A_i,\vp_i)$ are selfless and exact. 
Then the tensor product $\bigotimes_i (A_i,\vp_i)$ is selfless. 
In particular, the class of countable discrete groups that are 
$\mathrm{C}^*$-selfless and exact is closed under direct product. 
\end{thm}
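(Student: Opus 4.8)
The plan is to reduce to two tensor factors and then, for a selfless exact pair, to produce a ``diagonal'' freely complementing Haar unitary inside the ultrapower.

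Recall (Robert) that a separable $\mathrm{C}^*$-prob\-a\-bility space $(A,\vp)$ is selfless exactly when, for a free ultrafilter $\cU$, there is a Haar unitary $u\in A^\cU$ that \emph{freely complements} $A$: the $*$-homomorphism from the reduced free product $A*C(\IT)$ to $A^\cU$ which is the diagonal embedding on $A$ and sends the canonical unitary of $C(\IT)=\mathrm{C}^*_\lambda(\IZ)$ to $u$ is an isometric embedding carrying the free-product state $\vp*\mu$ (with $\mu$ the Haar state of $C(\IT)$) to $\vp^\cU$. Adding a single Haar unitary suffices by reindexing, matching Theorem~\ref{thm:main}. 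Since selflessness is a local property, tested on finite subsets, and, for a countable family, $\bigotimes_i(A_i,\vp_i)=\varinjlim_F\bigotimes_{i\in F}(A_i,\vp_i)$ over finite $F$ with each finite sub-tensor-product sitting state-preservingly in the whole — hence $\bigl(\bigotimes_{i\in F}A_i\bigr)^\cU\subseteq\bigl(\bigotimes_iA_i\bigr)^\cU$ state-preservingly — it suffices to treat a finite family; and as the minimal tensor product of exact $\mathrm{C}^*$-alge\-bras is exact, induction reduces the problem to two factors $(A_1,\vp_1)$, $(A_2,\vp_2)$.

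So suppose $(A_1,\vp_1)$ and $(A_2,\vp_2)$ are selfless and exact, and choose freely complementing Haar unitaries $v_i=[v_i^{(n)}]_n\in A_i^\cU$. I would set $u:=[v_1^{(n)}\otimes v_2^{(n)}]_n\in(A_1\otimes A_2)^\cU$, which is a Haar unitary, and show it freely complements $A_1\otimes A_2$ with respect to $(\vp_1\otimes\vp_2)^\cU$ — this is exactly selflessness of $(A_1\otimes A_2,\vp_1\otimes\vp_2)$. To this end I would factor the $*$-homomorphism $(A_1\otimes A_2)*C(\IT)\to(A_1\otimes A_2)^\cU$ with $z\mapsto u$ as a composite of two isometric embeddings
\[
(A_1\otimes A_2)*C(\IT)\ \overset{\iota}{\hookrightarrow}\ \bigl(A_1*C(\IT)\bigr)\otimes\bigl(A_2*C(\IT)\bigr)\ \overset{\Psi}{\hookrightarrow}\ (A_1\otimes A_2)^\cU ,
\]
all free products reduced and all tensor products minimal, where $\iota$ is the identity on $A_1\otimes A_2$ and sends $z$ to $z\otimes z$, and where $\Psi$ uses the identification $\bigl(A_i*C(\IT)\bigr)\cong C^*(A_i,v_i)\subseteq A_i^\cU$ (isometric and state-preserving, by selflessness of $A_i$) and is the natural map $[\alpha^{(n)}]_n\otimes[\beta^{(n)}]_n\mapsto[\alpha^{(n)}\otimes\beta^{(n)}]_n$; the composite is then the diagonal embedding on $A_1\otimes A_2$ and carries $z$ to $u$, and one checks it is state-preserving using multiplicativity of $\lim_\cU$.

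That $\Psi$ is a well-defined isometric embedding on the \emph{minimal} tensor product is where exactness is needed: $C^*(A_1,v_1)$ and $C^*(A_2,v_2)$ are separable exact (being $\cong A_i*C(\IT)$ with $A_i$ exact, and reduced free products as well as tensoring with the nuclear $C(\IT)$ preserve exactness), and for separable exact subalgebras $B_i\subseteq X_i^\cU$ the natural map $B_1\otimes B_2\to(X_1\otimes X_2)^\cU$ is an isometric embedding — via the standard fact that $X^\cU\otimes B\hookrightarrow(X\otimes B)^\cU$ isometrically for exact $B$, applied twice and followed by a reindexing that collapses the resulting iterated ultrapower; this produces exactly $\Psi$.

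That $\iota$ is a well-defined isometric embedding is internal to free probability: $z\otimes z$ is a Haar unitary in $\bigl(A_1*C(\IT)\bigr)\otimes\bigl(A_2*C(\IT)\bigr)$ that is free from $A_1\otimes A_2$ with respect to $(\vp_1*\mu)\otimes(\vp_2*\mu)$ — a direct moment computation, the point being that the product state factorises over the two tensor legs and every surviving contribution of the first leg pins a matching contribution of the second whose coefficient carries a factor $(\vp_1\otimes\vp_2)(a\otimes b)=0$ — and the restriction of $(\vp_1*\mu)\otimes(\vp_2*\mu)$ to the subalgebra generated by $A_1\otimes A_2$ and $z\otimes z$ is a \emph{faithful} state with these free-product moments: faithful because $A_i*C(\IT)$ is simple (selflessness of $A_i$ passes to $A_i*C(\IT)$ and selfless algebras are simple; alternatively, reduced free-product states against the diffuse faithful trace $\mu$ are faithful) and the product of faithful states on a minimal tensor product of separable $\mathrm{C}^*$-alge\-bras is faithful; hence that subalgebra is the reduced free product $(A_1\otimes A_2)*C(\IT)$ by uniqueness of the reduced-free-product GNS construction, i.e.\ $\iota$ is an isomorphism onto it. For group $\mathrm{C}^*$-alge\-bras $\iota$ is merely the isometric embedding $\mathrm{C}^*_\lambda\bigl((\Ga_1\times\Ga_2)*\IZ\bigr)\hookrightarrow\mathrm{C}^*_\lambda\bigl((\Ga_1*\IZ)\times(\Ga_2*\IZ)\bigr)$ induced by an injective group homomorphism. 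Finally the last assertion of the theorem is immediate: $\mathrm{C}^*_\lambda(\Ga_1\times\Ga_2)=\mathrm{C}^*_\lambda(\Ga_1)\otimes\mathrm{C}^*_\lambda(\Ga_2)$ with $\tau_{\Ga_1\times\Ga_2}=\tau_{\Ga_1}\otimes\tau_{\Ga_2}$, and exactness passes to minimal tensor products. I expect the genuine difficulties to be the exactness bookkeeping behind $\Psi$ — and the reindexing collapsing the iterated ultrapower — together with carrying out the embedding $\iota$ for arbitrary $\mathrm{C}^*$-prob\-a\-bility spaces rather than just group $\mathrm{C}^*$-alge\-bras.
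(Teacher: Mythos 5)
Your reduction to two exact factors and your outer embedding $\Psi$ (minimal tensor products of exact subalgebras of ultrapowers sit inside the ultrapower of the tensor product) are fine, but the heart of the argument --- that $z\otimes z$ is free from $A_1\otimes A_2$ in $(A_1*C(\IT))\otimes(A_2*C(\IT))$ with respect to the product of the free-product states, so that $\iota$ is an isometric, state-preserving copy of the reduced free product $(A_1\otimes A_2)*C(\IT)$ --- is false. Write $\mu$ for the Haar state on $C(\IT)$, pick $a\in\ker\vp_1$ and $b\in\ker\vp_2$ nonzero in the GNS representations, and consider the alternating word in centered elements
\[
w=(a\otimes1)\,(z\otimes z)\,(1\otimes b)\,(z\otimes z)^{*}\,(a^{*}\otimes1)\,(z\otimes z)\,(1\otimes b^{*})\,(z\otimes z)^{*}.
\]
Multiplying coordinatewise gives $w=(aa^{*})\otimes(zbb^{*}z^{*})$, and since $(\vp_2*\mu)(zbb^{*}z^{*})=\vp_2(bb^{*})$ one gets
\[
\bigl((\vp_1*\mu)\otimes(\vp_2*\mu)\bigr)(w)=\vp_1(aa^{*})\,\vp_2(bb^{*})\neq0 ,
\]
whereas freeness would force this to vanish. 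Conceptually, $a\otimes1$ commutes with $(z\otimes z)(1\otimes b)(z\otimes z)^{*}=1\otimes zbz^{*}$, so $w$ is a commutator-type mixed identity that is automatically degenerate in any tensor product but not in $(A_1\otimes A_2)*C(\IT)$; this is the very reason direct products of groups are never mixed-identity-free, and it shows that \emph{no} single diagonal unitary $[v_1^{(n)}\otimes v_2^{(n)}]_n$ can freely complement $A_1\otimes A_2$. Your heuristic that every surviving contribution of the first leg is pinned to one of the second carrying a factor $\vp_1(a)\vp_2(b)=0$ breaks down because the $z$-cancellations occur at different positions in the two legs.

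This obstruction is exactly what the paper's proof is built to circumvent: instead of one diagonal Haar unitary it uses the whole diagonal copy of $\bF_\infty$ inside $\bF_\infty^{(1)}\times\bF_\infty^{(2)}$ acting on $\bigast_{\bF_\infty^{(1)}}(A_1,\vp_1)\otimes\bigast_{\bF_\infty^{(2)}}(A_2,\vp_2)$, together with the averaged elements $s_n=(8n)^{-1/2}\sum_{i\le n}(t_i+t_i^{-1})$. Freeness from $A_1\otimes A_2$ then holds only \emph{asymptotically}, via Theorem~\ref{thm:crossedproduct} and Voiculescu's inequality, which is what makes cross terms like the one above (present for each single $i$) of order $n^{-1/2}$ after averaging. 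To salvage a proof along your lines you would have to replace the single diagonal element $v_1^{(n)}\otimes v_2^{(n)}$ by such an averaged, mixing family; as written, the key freeness claim, and with it the injectivity and state-preservation of $\iota$, fails.
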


We introduce the notion of complete selflessness in Section~\ref{sec:cs} and 
prove that the examples in Theorem~\ref{thm:main} and their tensor product 
are in fact completely selfless, while 
no nuclear tracial $\mathrm{C}^*$-prob\-a\-bility space is completely selfless. 
The following is a partial converse to 
Theorem 3.1 in \cite{robert} and solves Question 5.4 in \cite{robert}. 
Note that by Matui and Sato's theorem (\cite{ms}), 
$\cZ$-stability is necessary for a nuclear 
tracial $\mathrm{C}^*$-prob\-a\-bility space to be selfless.

\begin{thm}\label{thm:general}
A simple and purely infinite 
$\mathrm{C}^*$-prob\-a\-bility space $(A,\vp)$ 
is completely selfless. 
A simple, exact, $\cZ$-stable, and uniquely tracial 
$\mathrm{C}^*$-prob\-a\-bility space $(A,\tau)$ 
is selfless. 
\end{thm}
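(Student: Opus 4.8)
The plan is, in both cases, to produce for a suitable free ultrafilter $\cU$ a Haar unitary $u$ in the ultrapower $\mathrm{C}^*$-probability space $(A^\cU,\vp^\cU)$ (respectively $(A^\cU,\tau^\cU)$) that is freely independent from the diagonally embedded copy of $A$; we may and do assume the state is faithful, as selflessness does not depend on that choice. Given such a $u$, the subalgebra generated by $A$ and $u$ carries the free-product state $\vp*\tau_0$ and is therefore a quotient of the reduced free product $(A,\vp)*(C^*(\IZ),\tau_0)$; since the free product of two faithful states---here one of them the Haar state on $C^*(\IZ)$, which is diffuse---is faithful on the reduced free product, this quotient has trivial kernel, so one obtains the desired faithful state-preserving embedding into $A^\cU$ (with $z\mapsto u$). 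Complete selflessness is the corresponding assertion carried out uniformly in the relevant parameters; see Section~\ref{sec:cs}. Finally, by a routine reduction the existence of $u$ is equivalent to: for every finite $F\subseteq A$, every $\ve>0$ and every $N\in\IN$ there is a unitary in $A$ (resp.\ in each $M_r(A)$, uniformly in $r$) whose mixed $*$-moments of order $\le N$ against $F$ agree with those prescribed by freeness up to $\ve$; one then puts $u=[u_n]_n$ along $\cU$.

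For $(A,\vp)$ simple and purely infinite I would build such a unitary by hand from the abundance of room. Fix $F$, $\ve$, $N$ and a matrix size $r$, and pick $L\ge N$ large. Using pure infiniteness together with real rank zero, realise inside $A\otimes 1_r\subseteq M_r(A)$ a long ``Rokhlin tower'' $q_1,\dots,q_L$ of pairwise orthogonal, mutually Murray--von Neumann equivalent projections covering all but a $\vp$-small part of the unit, and let $u$ be the cyclic-shift unitary associated to partial isometries $q_1\to q_i$ chosen generically inside the (again simple and purely infinite) corners. Genericity of these partial isometries makes the off-diagonal contributions cancel, so that $\vp(u^k)\approx 0$ for $0<|k|\le N$ as $L\to\infty$, and the tower can moreover be arranged so that conjugation by $u$ decorrelates $F$, forcing the alternating mixed moments to their free values. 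As the whole construction sits in $A\otimes 1_r$, it is uniform in $r$; passing to the ultralimit yields complete selflessness.

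For $(A,\tau)$ simple, exact, $\cZ$-stable and uniquely tracial I would pass to the finite von Neumann completion $M:=\pi_\tau(A)''$, a $\mathrm{II}_1$ factor that is McDuff because $\cZ$-stability gives $M\cong M\mathbin{\bar\otimes}R$ with $R$ the hyperfinite $\mathrm{II}_1$ factor. The $\mathrm{C}^*$-ultrapower $A^\cU$ surjects onto the tracial ultrapower $M^\cU$, the state $\tau^\cU$ factors through this surjection, and by Kaplansky density the unitaries of $A$ are $\|\cdot\|_2$-dense in those of $M$; hence a Haar unitary in $M^\cU$ freely independent from $M$ lifts to a unitary in $A^\cU$ with the same $\tau^\cU$-moments against $A$, and it suffices to produce the former. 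The point here is to manufacture genuine free independence out of the merely tensorial/central room provided by $\cZ$-stability: this is where I would invoke---reproving it via Voiculescu's asymptotic freeness of Haar-distributed unitaries from constant matrices, carried out inside the approximately central copies $M\mathbin{\bar\otimes}M_{2^m}\subseteq M$---that a McDuff $\mathrm{II}_1$ factor admits in its tracial ultrapower a Haar unitary free from itself. Exactness of $A$ enters to make the Matui--Sato-type structure theory of $\cZ$-stable algebras (and the comparison of $A$ with $M$) available. The ultralimit is the desired free Haar unitary; one obtains here only selflessness, not its completion, consistently with no nuclear tracial $\mathrm{C}^*$-probability space being completely selfless.

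The main obstacle, in both cases, is exactly this production of the approximately free Haar unitary with controlled moments. In the purely infinite case it is the decorrelation step: arranging the tower so that the shift genuinely randomises the prescribed finite set, uniformly in the amplification. In the tracial case it is more conceptual: $\cZ$-stability, property~(SI) and McDuff-ness all supply only approximately commuting structure, whereas free independence is the antithesis of commuting, so the freeness has to be created by a genuinely probabilistic (Voiculescu-type) argument inside the hyperfinite core rather than extracted from a central sequence algebra.
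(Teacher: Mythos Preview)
Your scheme has a genuine gap at its very first step. From a Haar unitary $u\in A^{\cU}$ that is \emph{free in distribution} from $A$ with respect to $\vp^{\cU}$, you conclude that $\mathrm{C}^*(A,u)$ ``is therefore a quotient of the reduced free product''. This does not follow. What you have is a $*$-homomorphism from the \emph{full} free product $A * C(\IT)$ into $A^{\cU}$ intertwining the states; to factor it through the \emph{reduced} free product you must show $\|x\|_{A^{\cU}}\le \|x\|_{\mathrm{red}}$ for every noncommutative $*$-polynomial $x$ in $A$ and $u$. Since $\vp^{\cU}$ (and in particular $\tau^{\cU}$) is not GNS-faithful on $A^{\cU}$, matching moments says nothing about this norm bound. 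In your tracial argument the problem is especially visible: a Haar unitary in the tracial ultrapower $M^{\cU}$ of the $\mathrm{II}_1$ completion, lifted to $A^{\cU}$, has the correct $\tau^{\cU}$-moments, but $\|p(a,u_n)\|$ is governed by the operator norm in $A$, not by $\|\cdot\|_2$, and nothing in a von Neumann/McDuff argument controls it. This is precisely the distinction between convergence in moments and \emph{strong} convergence that the whole subject turns on. The ``decorrelation'' step in your purely infinite sketch has the same defect: cancellation under $\vp$ does not force cancellation in norm.

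The paper avoids this problem by never aiming for a free Haar unitary. The device is an \emph{isometry} $T$ satisfying the single algebraic relation $T^*aT=\vp(a)1$ (and $\vp^{\cU}(TT^*)=0$). By Lemma~\ref{lem:toeplitz}, the reduced free product $(A,\vp)\ast(\cT,\omega)$ is the \emph{universal} $\mathrm{C}^*$-algebra on $A$ and such an isometry, so any realization automatically receives a $*$-homomorphism from it: the norm estimate comes for free from universality, and the ``moreover'' clause pins down the state. In the purely infinite case such $T$ is produced directly in $A^{\cU}$ via Glimm's lemma and excision of a pure state disjoint from $\vp$. In the tracial case one cannot place $T$ in $A^{\cU}$; instead the paper uses the existential embedding $A\hookrightarrow A\otimes \mathrm{C}^*_\lambda(\bF_\infty)$ coming from $\mathrm{C}^*_\lambda(\bF_\infty)\hookrightarrow\cZ^{\cU}$ together with exactness, the Haagerup--Zsid\'o uniform Dixmier property, and Voiculescu's inequality to build $T$ inside $(A\otimes\cO_\infty)^{\cU}$ with $(T+T^*)/2\in (A\otimes\cC)^{\cU}$, then descends to $(A,\tau)\ast(\cC_1,\tau)$ by relative existentialness. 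Exactness is used for the tensor-product step, not for Matui--Sato structure theory as you suggest.
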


\subsection*{Notations}
We use the symbol ``$\pm$'' for a slightly abusive notation 
for ``$+$ and/or $-$'' or ``$+1$ and/or $-1$''. 
We denote by $A^\cU$ the ultrapower of 
a $\mathrm{C}^*$-alge\-bra $A$ and 
by $[a_n]_n$ the element in $A^\cU$ 
represented by a sequence $(a_n)_n$ in $A$. 
A \emph{$\mathrm{C}^*$-prob\-a\-bility space}   
is a pair $(A,\vp)$ of a unital $\mathrm{C}^*$-alge\-bra $A$ 
and a distinguished state $\vp$ which is not necessarily faithful 
but assumed GNS-faithful (a.k.a.\ non-degenerate) 
unless the $\mathrm{C}^*$-alge\-bra 
in consideration is an ultrapower or an ultraproduct. 

\subsection*{Acknowledgments}
A part of this work was carried out while the author was visiting 
the Mathematisches Forschungsinstitut Oberwolfach for 
the workshop ``C*-Algebras'', August 3--8, 2025 and 
the Isaac Newton Institute for the program 
``Operators, Graphs, Groups'' in October 2025. 
He acknowledges the kind hospitality and the exciting
environment provided by the institutes. 
He thanks G. Patchell for the stimulating talk at MFO 
and a useful comment on this paper, 
E.~Breuillard and I.~Vigdorovich
for insights into $\mathrm{PSL}(d,\IR)$, 
and R. Arimoto, L.~Robert, and T. Takeishi for helpful comments. 
\section{Trees and covariant representations}\label{sec:repn}
We recall the notion of trees and their compactification. 
A tree is a connected graph without nontrivial cycles. 
See Section 5.2 and Appendix E in \cite{bo} for more 
on this theme. 
Here we consider a countable tree $\Tree$ (of infinite degree).
The tree $\Tree$ is identified with its vertex set. 
A \emph{geodesic ray} is an infinite sequence 
$\omega(0),\omega(1),\ldots$ in $\Tree$ 
such that $\omega(n)$ adjacent to $\omega(n-1)$ 
and $\omega(n)\neq \omega(n-2)$. 
Geodesic rays $\omega$ and $\omega'$ are \emph{equivalent} 
if there are $N$ and $N'$ such that 
$\omega(N+n)=\omega'(N'+n)$ for all $n\geq0$.
The \emph{boundary} $\partial\Tree$ is the equivalence classes 
of geodesic rays and 
$\bar{\Tree} \coloneq \Tree \cup \partial\Tree$ 
is the compactification of $\Tree$ (whose topology 
will be introduced shortly). 
We define $f\colon\Tree\times\bar{\Tree}\to\Tree$ as follows. 
For $s\in\Tree$, we set $f(s,s)=s$. 
For distinct $s,t\in\Tree$, we define $f(t,s)\in\Tree$ 
to be the unique point between $t$ and $s$ 
that is adjacent to $t$. 
For $t\in\Tree$ and $\omega\in\partial\Tree$, 
we define $f(t,\omega) \coloneq f(t,\omega(n))$ for any $n$ large enough. 
For each $t\in\Tree$, we consider a copy $\Tree_t$ 
of $\Tree$ (or the union of $t$ and its adjacent points) 
and equip $\Tree_t$ a compact topology by taking 
the one-point compactification $\Tree\cup\{\infty\}$ 
of $\Tree$ and identify the new point $\infty$ with $t$. 
We consider the embedding 
\[
\bar{\Tree} \ni \omega \mapsto 
(f(t,\omega))_t \in \prod_{t\in\Tree}\Tree_t
\]
and equip $\bar{\Tree}$ with the induced topology. 
This makes $\bar{\Tree}$ a second-countable 
compact topological space such that every automorphism 
on $\Tree$ extends to a homeomorphism on $\bar{\Tree}$ 
(Proposition 5.2.5 in \cite{bo}). 
Note that if $s\in\Tree$ and $t_n\in\Tree$ are such that 
$f(s,t_n)\to\infty$ in $\Tree$, then $t_n\to s$ in $\bar{\Tree}$. 

\begin{thm}\label{thm:repn}
Let $\La$ be a countable discrete group acting 
on a countable tree $\Tree$ and 
on a $\mathrm{C}^*$-alge\-bra $C$. 
Let $\pi\colon \La\ltimes_{\max} C\to\IB(\cH)$ be a 
covariant representation 
and assume that there is a $\La$-equivariant 
representation of $C(\bar{\Tree})$ into the commutant 
$\pi(C)'$ of $\pi(C)$. 
Then $\pi$ is continuous on the reduced 
crossed product $\La\ltimes_{\min} C$ 
if for every $[t]\in\Tree/\La$ 
the restriction of $\pi$ to the stabilizer subgroup 
$\La_t \coloneq \{ g\in\La : gt=t\}$ is continuous on 
the reduced crossed product $\La_t\ltimes_{\min} C$. 
\end{thm}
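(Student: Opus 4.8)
The plan is to reduce everything to one weak-containment assertion and then establish that by a tree-contraction argument. Write $\mathrm{Ind}_C^\La(\rho)$ for the covariant representation of $\La\ltimes_{\max}C$ on $\ell^2(\La)\otimes\cH_\rho$ regularly induced from a representation $\rho$ of $C$. Inducing a \emph{faithful} representation of $C$ realises the norm of $\La\ltimes_{\min}C$; since induction is monotone for weak containment and a faithful representation of $C$ weakly contains $\pi|_C$, it is enough to prove $\pi\prec\mathrm{Ind}_C^\La(\pi|_C)$. Unwinding this --- it suffices to approximate, for each unit vector $\eta\in\cH$, the vector state $\ip{\pi(\cdot)\,\eta,\eta}$ of $\pi$ by vector states of $\mathrm{Ind}_C^\La(\pi|_C)$, tested on the generators $cu_h$ of the algebraic crossed product --- the goal becomes: given a unit vector $\eta\in\cH$, a finite set $F\subseteq\La$, and $\ve>0$, find a finitely supported family $(\zeta_g)_{g\in\La}$ in $\cH$ with $\sum_g\lVert\zeta_g\rVert^2=1$ such that
\[
\Bigl|\,\sum_{g\in\La}\ip{\pi(g^{-1}\!\cdot c)\,\zeta_{h^{-1}g},\,\zeta_g}-\ip{\pi(c)\,\pi(u_h)\eta,\,\eta}\,\Bigr|<\ve
\]
for all $h\in F$ and all $c\in C$ with $\lVert c\rVert\le1$.

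The construction of $(\zeta_g)$ is the tree/boundary version --- with coefficients in $C$ and the covariant representation $\pi$ --- of the classical proof that a group acting on a tree with exact vertex stabilisers is exact (cf.\ \cite{bo}). Fix a base vertex $o\in\Tree$ and set $D:=\max_{h\in F}d(o,ho)$, where $d$ is the graph metric on $\Tree$. Each edge $e$ of $\Tree$ separates $\bar{\Tree}$ into two clopen half-trees, whence --- via the given equivariant image of $C(\bar{\Tree})$ in $\pi(C)'$ --- a pair of complementary projections in $\pi(C)'$ indexed by the sides of $e$; these are permuted by the $\La$-action on edges and, crucially, commute with $\pi(C)$. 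Out of the half-tree projections attached to the geodesics emanating from $o$ one builds the standard ``tree contraction'' data: a finitely supported, positive-definite kernel on the orbit $\La o$ that tends to $1$ on bounded sets, dressed up by these projections into operators that are $\La$-equivariant up to a failure confined within distance $D$ of $o$ (because $d$ and the half-trees transform equivariantly). Two points make this data interact correctly with $\pi$: the half-tree projections lie in $\pi(C)'$, so they slide past the coefficients $c$; and covariance lets the unitaries $\pi(u_g)$ telescope. Plugging this data into the displayed approximation, the only residue is an analogous approximation problem on each of the vertex stabilisers $\La_t$ whose orbit meets the $D$-ball about $o$.

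That residue is exactly the hypothesis. For each relevant orbit $[t]\in\Tree/\La$, the assumption that $\pi|_{\La_t}$ is continuous on $\La_t\ltimes_{\min}C$ gives, by the criterion above applied to the group $\La_t$, a finitely supported vector field witnessing $\pi|_{\La_t}\prec\mathrm{Ind}_C^{\La_t}(\pi|_C)$; transporting it along the orbit by fixed coset representatives of $\La/\La_t$ --- a choice consistent with the half-tree projections, since $\La_t$ fixes $t$ and hence those half-trees --- yields a ``stabiliser correction'', and $(\zeta_g)$ is obtained as the product of the tree data with the stabiliser corrections, with both refined simultaneously. I expect the fusion of these two ingredients to be the main obstacle: the tree data is indexed by vertices and organised along the (possibly infinite) graph of groups $\Tree/\La$, whereas the stabiliser data is indexed by group elements, so one must interleave the two scales of approximation so that all error terms vanish together, keep track of how edge and vertex stabilisers sit in the graph of groups, and verify that every operator used remains inside $\pi(C)'$. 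The unusual convergence recorded just before the theorem --- a sequence of vertices escaping a vertex $s$ in all directions tending to $s$ in $\bar{\Tree}$ --- is what ensures that the half-tree and shadow assignments, together with their limits, genuinely lie in $C(\bar{\Tree})$ and vary continuously with the data, so that these limiting operators do not leave $\pi(C)'$.
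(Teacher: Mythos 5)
Your plan assembles the right raw ingredients---the amenability of $\La\acts\bar{\Tree}$, implemented through the given representation of $C(\bar{\Tree})$ into $\pi(C)'$, plus a reduction to the vertex stabilizers---but it stops short of a proof precisely at the step you yourself flag as ``the main obstacle'': fusing the tree data with the stabilizer data into a single finitely supported vector field in $\ell_2(\La)\otimes\cH$. That fusion is not a technicality to be checked later; it is the whole difficulty, and the paper's proof is organized so that it never arises. The paper interposes an intermediate representation $\tilde\pi$ on $\ell_2\Tree\otimes\cH$, acting by $g\mapsto(\tau\otimes\pi)(g)$ and $a\mapsto 1\otimes\pi(a)$. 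Step 1 shows $\pi\prec\tilde\pi$ using \emph{only} the tree data: the Borel maps $\zeta_n\colon\bar{\Tree}\to\mathrm{Prob}(\Tree)$ of Lemma 5.2.6 in \cite{bo} yield functions $h_n^t=\zeta_n^{(\cdot)}(t)$, and the isometries $V_n=(\theta(h_n^t)^{1/2})_{t\in\Tree}$ asymptotically intertwine $\pi$ with $\tilde\pi$ while commuting exactly with $\pi(C)$; here $\theta$ is the \emph{$\sigma$-normal} extension of the $C(\bar{\Tree})$-representation to bounded Borel functions, which is needed because the $h_n^t$ are only Borel (vertices of $\Tree$ are not isolated in $\bar{\Tree}$) and the sums are infinite---so your closing remark that the relevant functions ``genuinely lie in $C(\bar{\Tree})$'' is not how this issue is resolved. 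Step 2 then needs no tree input at all: the vectors $\delta_t\otimes\xi$ are cyclic for $\tilde\pi$, and their vector states equal $\psi\circ E$ for the conditional expectation $E$ onto $\La_t\ltimes_{\max}C$, so continuity of $\tilde\pi$ on $\La\ltimes_{\min}C$ follows directly from the hypothesis on each $\La_t\ltimes_{\min}C$. No interleaving of the two scales is ever required.

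There is also a mismatch between your target and your hypothesis. You aim at $\pi\prec\mathrm{Ind}_C^\La(\pi|_C)$ with vector fields valued in $\cH$, and your residue step needs $\pi|_{\La_t\ltimes C}\prec\mathrm{Ind}_C^{\La_t}(\pi|_C)$. But the hypothesis only gives continuity of $\pi|_{\La_t\ltimes C}$ on $\La_t\ltimes_{\min}C$, i.e., weak containment in the representation induced from a \emph{faithful} representation $\rho$ of $C$; when $\pi|_C$ is not faithful this is genuinely weaker than weak containment in $\mathrm{Ind}_C^{\La_t}(\pi|_C)$ (the discrepancy is an exactness-type phenomenon for quotients of $C$). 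Consequently the stabilizer vector fields live in $\ell_2(\La_t)\otimes\cH_\rho$, not in $\cH$, and cannot be multiplied against your half-tree projections, which act on $\cH$. The object your argument should be targeting is $\bigoplus_{[t]\in\Tree/\La}\mathrm{Ind}_{\La_t\ltimes C}^{\La\ltimes C}\bigl(\pi|_{\La_t\ltimes C}\bigr)$---which is exactly the paper's $\tilde\pi$---followed by induction in stages; once you reorganize around that, you are reproducing the paper's proof.
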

\begin{proof}
Let $\pi\colon \La\ltimes_{\max} C\to\IB(\cH)$ be given. 
We denote by $\tau$ the unitary representation 
of $\La$ on $\ell_2\Tree$ induced by $\La\acts\Tree$
and denote by $\sigma$ the $\Lambda$ action on $G$.  
We claim that $\pi$ is weakly contained in the representation 
$\tilde{\pi}$ on 
$\ell_2\Tree \otimes\cH$ given by 
$g\mapsto (\tau\otimes\pi)(g)$ for $g\in\La$ 
and 
$a\mapsto 1\otimes \pi(a)$ for $a\in C$. 
Indeed, there is a sequence  
$\zeta_n\colon\bar{\Tree}\to\mathrm{Prob}(\Tree)$ 
of Borel maps such that 
$\sup_{x\in\bar{\Tree}}\| g\zeta_n^x - \zeta_n^{gx} \|_1\to0$
for every $g\in\La$ (Lemma 5.2.6 in \cite{bo}). 
It follows that $h_n^t(x) \coloneq \zeta_n^x(t)$ defines sequences 
of positive Borel functions on $\bar{\Tree}$ such that 
$\sum_{t\in\Tree} h_n^t=1$ and 
\[
\| \sum_{t\in\Tree} | h_n^{gt} - \sigma_g(h_n^t) | \|_\infty \to 0.
\]
NB: the above sum is an infinite sum of positive Borel functions. 
The $\La$-equi\-variant representation 
of $C(\bar{\Tree})$ into $\pi(C)'$ uniquely 
extends to a \emph{$\sigma$-normal} $\La$-equi\-variant 
representation $\theta$ from 
the $\mathrm{C}^*$-alge\-bra $B(\bar{\Tree})$ 
of bounded Borel functions into $\pi(C)'$. 
By ``$\sigma$-normal'', 
we mean that it sends a bounded pointwise convergent 
sequence to a strong operator topology convergent sequence. 
We define isometries 
$V_n\colon \cH\to\ell_2\Tree\otimes\cH$ 
by $V_n\xi \coloneq \sum_{t\in\Tree} \delta_t \otimes \theta(h_n^t)^{1/2}\xi$, 
or equivalently, $V_n$ is the column operator $(\theta(h_n^t)^{1/2})_{t\in\Tree}$.
Then 
\[
\|V_n \pi(g) - (\tau\otimes\pi)(g)V_n\|^2 
=\| \sum_t |\theta(h_n^{gt})^{1/2}-\theta(\sigma_g(h_n^t))^{1/2}|^2 \|_\infty \to 0
\]
for every $g\in\La$ 
and $V_n\pi(a) = (1\otimes\pi(a))V_n$ for $a\in C$. 
This proves the claim that $\pi$ is weakly contained in $\tilde{\pi}$. 
Thus continuity of $\pi$ follows from that of $\tilde{\pi}$. 

For the continuity of $\tilde{\pi}$, it suffices to consider the continuity 
of the vector states $\vp$ associated with the vectors of 
the form $\delta_t\otimes\xi$, since they form a cyclic family. 
But $\vp = \psi\circ E$, where $E$ is the canonical 
conditional expectation 
from $\La\ltimes_{\max}C$ onto $\La_t \ltimes_{\max} C$ 
and $\psi$ is the vector state associated with $\xi$. 
(We do not need the fact that the natural embedding 
$\La_t \ltimes_{\max} C\subset \La\ltimes_{\max}C$ 
is faithful.)
Hence the continuity on the reduced crossed product 
$\La\ltimes_{\min}C$ follows from that 
for $\La_t \ltimes_{\min}C$ for $t\in\Tree$ 
one from each $\La$-orbit. 
\end{proof}

\section{Extreme boundaries and ``tree-graded'' spaces}\label{sec:space}
Let $\tilde{\Ga} \coloneq \Ga * \ip{z}$. 
By the \emph{reduced form} of $g\in\tilde{\Ga}$, 
we mean the unique expression 
\[
g = a_1z^{\epsilon_1} a_2 z^{\epsilon_2} \cdots a_l z^{\epsilon_l} a_{l+1}
\]
with $a_i\in\Ga$ and $\epsilon_i\in\{\pm\}$ such that 
$i>1$ and $a_i=1$ implies $\epsilon_{i-1} = \epsilon_i$. 
We introduce the tree structure 
on $\Tree \coloneq \tilde{\Ga}/\Ga$ 
on which $\tilde{\Ga}$ acts. 
We declare that the points $s,t\in\Tree$ are adjacent if 
$s^{-1}t \in\{ \Ga z^\pm\Ga\}$. 
With this graph structure $\Tree$ 
becomes a tree (of infinite degree). 
We call the point $o \coloneq \Ga\in\Tree$ the \emph{origin}. 
Every $t\in\Tree$ has unique representative 
$\rho(t)\in\tilde{\Ga}$ whose reduced form ends by $z^\pm$, 
or $\rho(o)=1$ if $t=o$. 
Note that $\rho(at)=a\rho(t)$ 
for all $a\in\Ga$ and $t\in\Tree\setminus\{o\}$ 
and that $\rho(z^\pm t)=z^\pm\rho(t)$ for all $t\in\Tree$. 

Suppose first that $s$ and $t$ are adjacent. 
We label the oriented edge from $t$ to $s$ by 
$\ell(t,s) \coloneq \kappa(\rho(t)^{-1}s)\in \Ga\{ z^\pm \}$, 
where $\kappa(az^\pm \Ga) \coloneq az^{\pm}$.  
Note that $t$ and $\ell(t,s)$ together uniquely determines $s$. 
E.g., $\ell(az\Ga,azbz\Ga)=bz$ 
and $\ell(azbz\Ga,az\Ga)=z^{-1}$ for every $a,b\in\Ga$. 
The Figure~\ref{fig:1} shows how $\Tree$ and $\ell$ look like. 
Next we extend the \emph{label map} on 
$(\Tree\times\bar{\Tree})\setminus\{\mbox{diagonal}\}$
by setting $\ell(t,s)\coloneq\ell(t,t_1)$, 
where $t_1$ is the unique point between 
$t$ and $s$ that is adjacent to $t$, or colloquially, $\ell(t,s)$ 
is the first $\Ga\{z^\pm\}$ component in the reduced 
form of $\rho(t)^{-1}\rho(s)$.  
For example, $\ell(az,azbzcz\Ga)=bz$ and 
$\ell(cz\Ga,azbz\Ga)=z^{-1}$ unless $a=c$. 
\begin{figure}
\includegraphics[height=4cm]{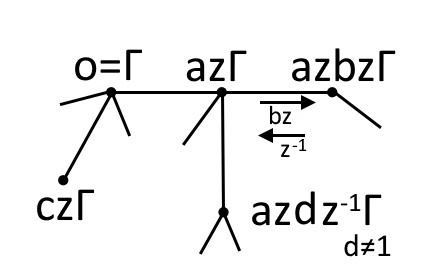}
\caption{The tree $\Tree$}\label{fig:1}
\end{figure}
The following is not hard to see. 

\begin{lem}\label{lem:labelequiv}
Let $(t,s)\in (\Tree\times\bar{\Tree})$ 
be a non-diagonal point. 
For $a\in\Ga$, one has $\ell(o,as)=a\ell(o,s)$ 
and $\ell(at,as) = \ell(t,s)$ for $t\neq o$. 
Also, $\ell(zt,zs)=\ell(t,s)$. 
\end{lem}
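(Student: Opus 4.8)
The plan is to reduce all three identities to the case in which $s$ is adjacent to $t$, and then to read them off from the defining formula $\ell(t,s)=\kappa(\rho(t)^{-1}s)$ together with the two equivariance properties of $\rho$ already noted in the text: $\rho(at)=a\rho(t)$ for $a\in\Ga$ and $t\neq o$, and $\rho(z^{\pm}t)=z^{\pm}\rho(t)$ for all $t$.

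For the reduction I would invoke Proposition~5.2.5 in \cite{bo}: each element $g$ of $\tilde{\Ga}$ acts on $\bar{\Tree}$ by a homeomorphism extending a tree automorphism, so it preserves adjacency and the relation of lying between two points, and it carries the geodesic (or geodesic ray) from $t$ to $s$ onto the one from $gt$ to $gs$. Writing $t_1$ for the unique vertex adjacent to $t$ lying between $t$ and $s$, so that $\ell(t,s)=\ell(t,t_1)$ by the definition of the extended label map, we get that $gt_1$ is the vertex adjacent to $gt$ lying between $gt$ and $gs$, hence $\ell(gt,gs)=\ell(gt,gt_1)$. Taking $g=a$ (which fixes $o=\Ga$) reduces the first identity (where $t=o$) and the second identity (where $t\neq o$) to the case $s$ adjacent to $t$, and taking $g=z$ does the same for the third.

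It then remains to treat adjacent pairs, which is a short direct computation. If $s$ is adjacent to $o$, then $s=bz^{\epsilon}\Ga$ for some $b\in\Ga$ and $\epsilon\in\{\pm\}$, so $\ell(o,s)=\kappa(s)=bz^{\epsilon}$ while $\ell(o,as)=\kappa(abz^{\epsilon}\Ga)=abz^{\epsilon}=a\ell(o,s)$, where I use that $\kappa(ag\Ga)=a\,\kappa(g\Ga)$ is immediate from the definition of $\kappa$ --- whose well-definedness is itself a consequence of the uniqueness of reduced forms in $\Ga*\ip{z}$. If $s$ is adjacent to $t\neq o$, then $at\neq o$, so $\rho(at)=a\rho(t)$ and therefore $\rho(at)^{-1}(as)=\rho(t)^{-1}a^{-1}as=\rho(t)^{-1}s$, giving $\ell(at,as)=\kappa(\rho(at)^{-1}(as))=\kappa(\rho(t)^{-1}s)=\ell(t,s)$. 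Finally $\rho(zt)=z\rho(t)$ for every $t$ yields $\rho(zt)^{-1}(zs)=\rho(t)^{-1}z^{-1}zs=\rho(t)^{-1}s$, so $\ell(zt,zs)=\ell(t,s)$ as well.

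I do not expect a serious obstacle here (the author indeed calls it ``not hard to see''). The two points that warrant a moment's care are the well-definedness of $\kappa$ and $\rho$ (which rests on the uniqueness of reduced forms in $\Ga*\ip{z}$) and the verification that passing to the first edge toward $s$ commutes with the $\tilde{\Ga}$-action even when $s\in\partial\Tree$ --- but that is exactly the content of the automorphism-extension statement from \cite{bo}.
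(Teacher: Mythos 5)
Your proof is correct, and since the paper gives no argument at all for this lemma (it is dismissed as ``not hard to see''), your two-step scheme --- first using that $\tilde{\Ga}$ acts by tree automorphisms extending to $\bar{\Tree}$ so that the ``first vertex toward $s$'' map is equivariant, then computing on adjacent pairs via $\rho(at)=a\rho(t)$ for $t\neq o$ and $\rho(zt)=z\rho(t)$ --- is exactly the intended routine verification. All three computations check out, including the well-definedness of $\kappa$ via uniqueness of reduced forms.
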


Let $\Ga\acts X$ be an extreme boundary 
with an axial sequence $(z_n)_n$. 
We denote by $\vp_n\colon\tilde{\Ga}\to\Ga$ the homomorphism 
that is identity on $\Ga$ and sends $z$ to $z_n\in\Ga$. 
A simple ping-pong argument on $X$ shows that $(\vp_n)_n$ 
is asymptotically injective and hence $\Ga$ is mixed-identity-free. 
More precisely, the following is true. 

\begin{lem}\label{lem:pingpong}
Let $g=a_1z^{\epsilon_1} \cdots z^{\epsilon_l}a_{l+1}$ 
be an element in the reduced form with $l\geq1$ 
and $x\in X \setminus\{a_{l+1}^{-1} z_{\bar{\epsilon}_l}\}$, 
where $\bar{\epsilon}$ is the opposite of $\epsilon$. 
Then $\vp_n(g) x \to a_1z_{\epsilon_1}$ as $n\to\infty$.
\end{lem}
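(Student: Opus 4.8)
The plan is to prove the Lemma by induction on $l$, reducing the inductive step to a single ping-pong step. First I would record the following uniform form of condition~(1): for every $\epsilon\in\{\pm\}$ and all neighborhoods $U_\pm$ of $z_\pm$ one has $z_n^{\epsilon}(X\setminus U_{\bar\epsilon})\subset U_\epsilon$ for all large $n$ — this is condition~(1) itself when $\epsilon=+$ and its stated equivalent reformulation when $\epsilon=-$. From this I extract the key claim: if $(y_n)_n$ is a sequence in $X$ with $y_n\to y$ and $y\neq z_{\bar\epsilon}$, then $z_n^{\epsilon}y_n\to z_\epsilon$. To see this, fix a neighborhood $U_\epsilon$ of $z_\epsilon$; since $X$ is Hausdorff and $y\neq z_{\bar\epsilon}$ we may choose disjoint neighborhoods $W\ni y$ and $U_{\bar\epsilon}\ni z_{\bar\epsilon}$, so that $y_n\in W\subset X\setminus U_{\bar\epsilon}$ eventually while $z_n^{\epsilon}(X\setminus U_{\bar\epsilon})\subset U_\epsilon$ eventually; hence $z_n^{\epsilon}y_n\in U_\epsilon$ eventually, and as $U_\epsilon$ was arbitrary the claim follows.

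Now write $g=a_1z^{\epsilon_1}g'$ with $g'=a_2z^{\epsilon_2}\cdots z^{\epsilon_l}a_{l+1}$, where $g'=a_{l+1}$ if $l=1$. Then $\vp_n(g)x=a_1\,z_n^{\epsilon_1}\bigl(\vp_n(g')x\bigr)$, and since $a_1$ acts on $X$ by a homeomorphism it suffices to prove $z_n^{\epsilon_1}(\vp_n(g')x)\to z_{\epsilon_1}$, i.e.\ to apply the key claim with $\epsilon=\epsilon_1$ and $y_n=\vp_n(g')x$. For the base case $l=1$ the sequence $y_n=a_{l+1}x$ is constant with limit $y=a_{l+1}x$, and $y\neq z_{\bar\epsilon_1}$ is exactly the hypothesis $x\notin\{a_{l+1}^{-1}z_{\bar\epsilon_l}\}$, so the claim applies. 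For $l\ge2$ I would observe that $g'$ is again in reduced form — the reduced-form condition for $g$ at the indices $i\ge3$ is precisely the one for $g'$ — and that $g'$ has the same trailing pair $z^{\epsilon_l}a_{l+1}$ as $g$; hence the induction hypothesis applies to $g'$ with the same point $x$ and gives $\vp_n(g')x\to a_2z_{\epsilon_2}$. It remains to check that $a_2z_{\epsilon_2}\neq z_{\bar\epsilon_1}$, so that the key claim is applicable: if $a_2=1$ then the reduced form of $g$ forces $\epsilon_2=\epsilon_1$, whence $a_2z_{\epsilon_2}=z_{\epsilon_1}\neq z_{\bar\epsilon_1}$ (using $z_+\neq z_-$); and if $a_2\neq1$ then condition~(2), the freeness of $\Ga\acts\{z_\pm\}$, prevents $a_2z_{\epsilon_2}$ from lying in $\{z_\pm\}$ at all.

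I do not expect a genuine obstacle here. The only points needing a little care are the combining of the two ``eventually'' statements in the proof of the key claim, the inheritance of the reduced form by $g'$, and the $a_2=1$ versus $a_2\neq1$ dichotomy in the inductive step — all routine. The real content is the observation that condition~(1) is exactly what lets $z_n^{\epsilon}$ swallow any closed set avoiding $z_{\bar\epsilon}$ into any prescribed neighborhood of $z_\epsilon$, while condition~(2) is exactly what keeps the intermediate limit points $a_iz_{\epsilon_i}$ away from the forbidden point $z_{\bar\epsilon_{i-1}}$ whenever $a_i\neq1$.
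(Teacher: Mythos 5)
Your proof is correct and is precisely the ``simple ping-pong argument'' the paper alludes to without writing out: an induction on $l$ in which condition~(1) of axiality contracts everything off a neighborhood of $z_{\bar\epsilon}$ into a neighborhood of $z_\epsilon$, and condition~(2) (together with the reduced-form constraint when $a_i=1$) keeps the intermediate limits $a_iz_{\epsilon_i}$ away from the forbidden point. No gaps; the $a_2=1$ versus $a_2\neq1$ dichotomy and the inheritance of the reduced form by $g'$ are exactly the points that need checking, and you check them.
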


We denote by $\sigma$ the corresponding action of $\Ga$ 
on $C(X) \subset C(X)^\cU$, where the embedding 
is the diagonal embedding.
We denote by $\sigma_z$ the automorphism 
arising from the sequence $(\sigma_{z_n})_n$.
They together give rise to the action 
of $\tilde{\Ga}=\Ga*\ip{z}$ on $C(X)^\cU$, 
which is given by
$\sigma_g([f_n]_n) = [\sigma_{\vp_n(g)}(f_n)]_n$.
We are interested in the $\tilde{\Ga}$-invariant 
$\mathrm{C}^*$-subalgebra 
\[
C(X_\Tree) \coloneq \mathrm{C}^*( \bigcup_{g\in\tilde{\Ga}} \sigma_g( C(X) ) ) \subset C(X)^\cU,
\] 
where $X_\Tree$ is the Gelfand spectrum of $C(X_\Tree)$.
By definition, there is a surjective homomorphism 
\[
Q\colon C(X^\Tree) \cong \bigotimes_{t\in\Tree} C(X)
 \to C(X_\Tree),\quad Q(f^{(s)}) = \sigma_{\rho(s)}(f),
\]
where $f^{(s)}\in C(X^\Tree)$ 
is defined for $f\in C(X)$ 
by $f^{(s)}(\mathbf{x}) \coloneq f(x_s)$, 
$\mathbf{x}=(x_t)_t$. 
This gives rise to an embedding 
$Q_*\colon X_T\to X^\Tree$.
We equip $X^\Tree$ a $\tilde{\Ga}$-action by 
declaring that $a\in\Ga$ acts at $\mathbf{x}=(x_t)_t\in X^\Tree$ 
by $( a \mathbf{x} )_o \coloneq ax_o$ 
and $(a\mathbf{x})_t \coloneq x_{a^{-1}t}$ for $t\in\Tree\setminus\{o\}$, 
and that $z$ acts by $(z\mathbf{x})_t \coloneq x_{z^{-1}t}$. 

\begin{lem}
The embedding $Q_*\colon X_\Tree\to X^\Tree$ 
is $\tilde{\Ga}$-equivariant.
\end{lem}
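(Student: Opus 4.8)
The plan is to work on the Gelfand dual side and show that the surjective $*$-homomorphism $Q$ intertwines the $\tilde\Ga$-action on $C(X^\Tree)\cong\bigotimes_{t\in\Tree}C(X)$ coming from the prescribed action $\tilde\Ga\acts X^\Tree$ with the action $\sigma$ restricted to $C(X_\Tree)$; dualizing this (recalling that $g\in\tilde\Ga$ acts on a character $\chi$, of $C(X_\Tree)$ or of $C(X^\Tree)$, by $\chi\mapsto\chi\circ\sigma_{g^{-1}}$) then yields the $\tilde\Ga$-equivariance of $Q_*$. Since $Q$ is a homomorphism and both actions are by $*$-automorphisms, for a fixed $g$ the set of $x\in C(X^\Tree)$ with $Q(g\cdot x)=\sigma_g(Q(x))$ is a $*$-subalgebra, and the set of $g\in\tilde\Ga$ for which this relation holds on all of $C(X^\Tree)$ is a subgroup. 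Hence it suffices to verify
\[
Q\bigl(g\cdot f^{(s)}\bigr)=\sigma_g\bigl(Q(f^{(s)})\bigr)
\]
for $g$ in the generating set $\Ga\cup\{z\}$ of $\tilde\Ga$ and for the generating family $\{f^{(s)}:s\in\Tree,\ f\in C(X)\}$ of $C(X^\Tree)$.

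Next I would unwind the left-hand side. From $(g\cdot f^{(s)})(\mathbf{x})=f\bigl((g^{-1}\mathbf{x})_s\bigr)$ and the definition of $\tilde\Ga\acts X^\Tree$ one reads off that, for $a\in\Ga$, one has $a\cdot f^{(o)}=(\sigma_a f)^{(o)}$ while $a\cdot f^{(s)}=f^{(as)}$ for $s\neq o$, and that $z\cdot f^{(s)}=f^{(zs)}$ for every $s\in\Tree$. Applying $Q$ and using $Q(f^{(s)})=\sigma_{\rho(s)}(f)$ together with $\rho(o)=1$: for $g=a\in\Ga$ and $s=o$ both sides equal $\sigma_a(f)$; for $g=a$ and $s\neq o$ the identity reads $\sigma_{\rho(as)}(f)=\sigma_{a\rho(s)}(f)$; for $g=z$ it reads $\sigma_{\rho(zs)}(f)=\sigma_{z\rho(s)}(f)$. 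The latter two hold because $\rho(as)=a\rho(s)$ for $s\neq o$ and $\rho(zs)=z\rho(s)$ for all $s$, which are exactly the properties of the section $\rho\colon\Tree\to\tilde\Ga$ recorded in Section~\ref{sec:space}.

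I do not expect a real obstacle; the one point to watch is the asymmetry between the origin $o$ and the remaining vertices of $\Tree$. On $X^\Tree$ an element $a\in\Ga$ acts on the $o$-coordinate through the original action $\Ga\acts X$ and merely permutes the other coordinates, so the intertwining relation has a different shape at $s=o$, and it is precisely the normalization $\rho(o)=1$ that makes it close up. I would also resist transporting the $\tilde\Ga$-action along $Q$ (which need not be injective), and instead verify the intertwining relation directly, appealing to the subalgebra/subgroup reduction above, which is legitimate because $Q$ is a homomorphism and both actions are by automorphisms.
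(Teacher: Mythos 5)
Your proposal is correct and follows essentially the same route as the paper: check the intertwining relation $Q(\varsigma_g(f^{(s)}))=\sigma_g(Q(f^{(s)}))$ on the generators $f^{(s)}$ for $g$ ranging over $\Ga\cup\{z\}$, using $Q(f^{(s)})=\sigma_{\rho(s)}(f)$ together with $\rho(o)=1$, $\rho(as)=a\rho(s)$ for $s\neq o$, and $\rho(zs)=z\rho(s)$. Your explicit justification of the reduction to generators and your remark on the asymmetry at the origin are both sound and merely make explicit what the paper leaves implicit.
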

\begin{proof}
Let $\varsigma$ denote the corresponding 
$\tilde{\Ga}$-action on $C(X^\Tree)$. 
It is enough to check that 

\noindent\begin{minipage}[b]{.95\textwidth}
\[
Q(\varsigma_a (f^{(o)})) = Q( \sigma_a(f)^{(o)})
 = \sigma_a(f) = \sigma_a(Q(f^{(o)})),
\] 
\[
Q(\varsigma_a (f^{(s)})) = Q( f^{(as)}) = \sigma_{\rho(as)}(f)
 = \sigma_a(\sigma_{\rho(s)}(f)) = \sigma_a(Q (f^{(s)})) 
\]
for $s\neq o$ and, 
\[
Q(\varsigma_z (f^{(s)})) = Q( f^{(zs)}) = \sigma_{\rho(zs)}(f)
 = \sigma_z(\sigma_{\rho(s)}(f)) = \sigma_z(Q(f^{(s)})).
\]
\end{minipage}
\end{proof}

Hereafter, we omit writing $Q$ and identify $\varsigma$ 
with $\sigma$. 
Thus we view $X_\Tree$ as a compact subset of $X^\Tree$.
We view the label map $\ell$ as a map taking values 
in $X$ by identifying $\Ga \{ z^\pm \}$ with $\Ga \{ z_\pm\}$. 
For each $s\in\Tree$, 
we define the continuous embedding 
$\theta(s,\,\cdot\,)\colon X\to X^\Tree$ by 
$\theta(s,x)_s \coloneq x$ and $\theta(s,x)_t \coloneq \ell(t,s)$ for $t\neq s$.
We also define the continuous embedding  
$\theta\colon\partial\Tree\to X^\Tree$ by 
$\theta(\omega)_t \coloneq \ell(t,\omega)$.
Note that $\omega\to\theta(\omega)_t$ is locally constant for every $t$. 

The space $X_\Tree$ has a kind of 
``tree-graded'' structure (\cite{ds}), as follows. 
See Figure~\ref{fig:2}. 
\begin{figure}
\includegraphics[height=4cm]{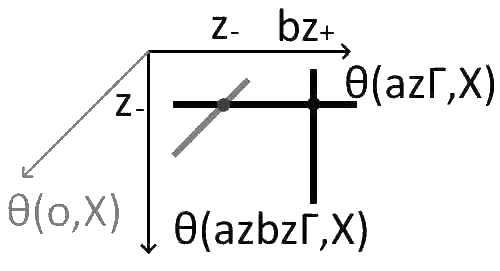}
\caption{The space $X_\Tree$ around 
$\theta(az\Ga,bz_+)=\theta(azbz\Ga,z_-)$.}\label{fig:2}
\end{figure}

\begin{thm}\label{thm:space}
In the above setting for $\tilde{\Ga}=\Ga*\ip{z}\acts\Tree$, the following hold.
\begin{enumerate}
\item\label{p1}
$X_\Tree = \theta(\Tree,X) \cup \theta(\partial\Tree)$.
\item\label{p2}
$\theta(\Tree,X) \cap \theta(\partial\Tree) = \emptyset$.
\item\label{p3}
For $s\neq t$ and $x,y\in X$, 
one has $\theta(s,x)=\theta(t,y)$ if and only if 
$s$ and $t$ are adjacent and $x=\ell(s,t)$ and $y=\ell(t,s)$. 
\item\label{p4}
For $a\in\Ga$, one has 
$a\theta(o,x)=\theta(o,ax)$ and $a\theta(s,x)=\theta(as,x)$ 
for $s\neq o$.
\item\label{p5}
$z\theta(s,x)=\theta(zs,x)$. 
\item\label{p6}
$\theta\colon \partial\Ga\to X_\Tree$ is $\tilde{\Ga}$-equivariant.
\item\label{p7}
A sequence $(\theta(t_n,x_n))_n$ 
converges to $\theta(s,x)$ in $X_\Tree$ if and only if 
either 
$(t_n)_n$ converges to $s$ in $\bar{\Tree}$ and 
the sequence $(y_n)_n$, defined by 
$y_n= x_n$ if $t_n=s$ and $y_n = \ell(s,t_n)$ if $t_n\neq s$,
converges to $x$ in $X$; 
or there is an adjacent point $s_1$ to $s_0 \coloneq s$ in $\Tree$ 
such that $t_n\in\{s_0,s_1\}$ eventually, $x=\ell(s_0,s_1)$, 
and the subsequences $(x_n)_{\{n : t_n=s_i\}}$ 
(possibly there is only one of them) converge 
to $\ell(s_i,s_j)$ for $\{i,j\}=\{0,1\}$.  
A sequence $(\theta(t_n,x_n))_n$ 
converges to $\theta(\omega)$ in $X_\Tree$ if and only if 
$(t_n)_n$ converges to $\omega$ in $\bar{\Tree}$.
\item\label{p8}
A sequence $\theta(\omega_n)$ 
converges to $\theta(s,x)$ in $X_\Tree$ if and only if 
$(\omega_n)_n$ converges to $s$ in $\bar{\Tree}$ and 
and $(\ell(s,\omega_n))_n$ converges to $x$ in $X$.
\item\label{p9}
One has 
$\theta(\Tree,\Ga\{z_\pm\})=\tilde{\Ga}\theta(o,\{z_\pm\})$. 
The map 
$\beta\colon X_\Tree \setminus \tilde{\Ga}\theta(o,\{z_\pm\}) 
\to\bar{\Tree}$, given by $\beta(\theta(s,x))=s$ and $\beta(\theta(\omega))=\omega$, is 
$\tilde{\Ga}$-equivariant and continuous. 
\end{enumerate}
\end{thm}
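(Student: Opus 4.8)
My plan is to realise $X_\Tree$ concretely inside $X^\Tree=\prod_{t\in\Tree}X$ --- on which $Q_*$ induces the product topology, so that convergence is coordinate-wise --- and then read everything off; the dynamical content lives in part~\ref{p1}, which I would attack first after two preliminaries. Define $\Phi_n\colon X\to X^\Tree$ by $\Phi_n(x)=(\vp_n(\rho(t))^{-1}x)_{t}$; then $Q(F)=[F\circ\Phi_n]_n$ on finitely supported tensors $F$, hence $\|Q(F)\|=\lim_\cU\|F\circ\Phi_n\|_\infty$, and therefore $Q_*(X_\Tree)=\bigcap_{A\in\cU}\overline{\bigcup_{n\in A}\Phi_n(X)}$; in particular a point of $X^\Tree$ belongs to $X_\Tree$ precisely when it is a limit of some $\Phi_{n_k}(x_k)$ with $n_k\to\infty$ and $x_k\in X$. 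Two facts will be used constantly: $\Phi_n(\vp_n(g)x)=g\Phi_n(x)$ for $g\in\tilde\Ga$ (so each $\Phi_n(X)$, hence $Q_*(X_\Tree)$, is $\tilde\Ga$-invariant); and, by Lemma~\ref{lem:pingpong} applied to $\rho(t)^{-1}$, $\vp_n(\rho(t))^{-1}x\to\ell(t,o)$ whenever $x\neq\ell(o,t)$, so for fixed $x\notin\Ga\{z_\pm\}$ one gets $\Phi_n(x)\to\theta(o,x)$ --- and the proof of Lemma~\ref{lem:pingpong} in fact gives the uniform version (the same limit for any $x_k\to x$ with $x\notin\Ga\{z_\pm\}$), which the descent below needs. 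I would first dispose of parts~\ref{p4}--\ref{p6} as coordinate-wise identities, substituting the $\tilde\Ga$-action on $X^\Tree$ and the equivariance of the label map (Lemma~\ref{lem:labelequiv}: $\ell(at,as)=\ell(t,s)$ for $t\neq o$, $\ell(o,as)=a\ell(o,s)$, $\ell(zt,zs)=\ell(t,s)$, valid also when the second slot lies in $\partial\Tree$) at each coordinate, keeping the coordinate $o$ separate in~\ref{p4} (the only place the $\Ga$-cocycle appears); a by-product is $\rho(s)\theta(o,x)=\theta(s,x)$.

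For part~\ref{p1}, the inclusion ``$\supseteq$'' goes: $\theta(o,x)\in X_\Tree$ for $x\notin\Ga\{z_\pm\}$ by the above, hence for all $x\in X$ since $X\setminus\Ga\{z_\pm\}$ is dense ($X$ is perfect --- an isolated point would make $|X|\le2$, incompatible with an axial sequence --- and $\Ga\{z_\pm\}$ is countable) and $X_\Tree$ is closed; then $\theta(s,x)=\rho(s)\theta(o,x)\in X_\Tree$ by invariance, and $\theta(\omega)=\lim_k\theta(\omega(k),\ell(\omega(k),\omega(k+1)))\in X_\Tree$ by a one-line coordinate check. The inclusion ``$\subseteq$'' is where the work is, via a \emph{descent} on $\Tree$: given $\mathbf x=\lim_k\Phi_{n_k}(x_k)$ with $n_k\to\infty$, set $\mathbf x^{(0)}=\mathbf x$, $y_0=\mathbf x^{(0)}_o=\lim_k x_k$; if $y_0\notin\Ga\{z_\pm\}$ then uniform ping-pong forces $\mathbf x_t=\ell(t,o)$ for $t\neq o$, so $\mathbf x=\theta(o,y_0)$; otherwise $y_0=\ell(o,t_1)$ for the unique neighbour $t_1$ of $o$ with that label, and I pass to $\mathbf x^{(1)}=\rho(t_1)^{-1}\mathbf x=\lim_k\Phi_{n_k}(\vp_{n_k}(\rho(t_1))^{-1}x_k)$, which satisfies $\mathbf x^{(1)}_o=\mathbf x_{t_1}$, and iterate. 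At stage $m$ I have a geodesic $o=t_0,\dots,t_m$ and $\mathbf x^{(m)}=\rho(t_m)^{-1}\mathbf x$ with $\mathbf x^{(m)}_o=y_m$, and I continue exactly when $y_m\in\Ga\{z_\pm\}$ labels an edge at $o$ \emph{other} than the one towards $t_{m-1}$; this ``no-backtracking'' rule is the point, since it keeps the partial products $\rho(t_1)\cdots\rho(t_m)$ reduced, hence equal to $\rho(t_m)$, so no cocycle twist ever intervenes. Applying uniform ping-pong to the coordinates beyond $t_m$ one then checks that either the descent halts, giving $\mathbf x=\theta(t_m,y_m)\in\theta(\Tree,X)$, or it runs forever, tracing a geodesic ray $\omega$ for which $\mathbf x=\theta(\omega)$.

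The remaining parts are coordinate-wise comparisons once~\ref{p1} is in hand. For~\ref{p2} and~\ref{p3}, evaluating an alleged $\theta(s,x)=\theta(t,y)$ (resp.\ $\theta(s,x)=\theta(\omega)$) at the first vertex where the routes towards $s$ and towards $t$ (resp.\ towards $\omega$) part company exhibits two distinct edges carrying distinct labels there, which is impossible unless $s,t$ are equal or adjacent (resp.\ never); evaluating at $s$ and at $t$ supplies the rest of~\ref{p3}. For~\ref{p7} and~\ref{p8}, $\theta(t_n,x_n)\to\theta(s,x)$ unwinds coordinate by coordinate --- using that $\ell(r,\cdot)$ is locally constant on $\bar\Tree\setminus\{r\}$ and that $f(s,t_n)\to\infty$ forces $t_n\to s$ (Section~\ref{sec:repn}) --- into precisely the two stated alternatives, the second reflecting the non-uniqueness of~\ref{p3} (one reaches $\theta(s,x)$ ``through'' the adjacent $s_1$ exactly when $x=\ell(s_0,s_1)$); the parallel analysis for sequences $\theta(\omega_n)$ is the same, the one-line form of~\ref{p8} being the case actually used, in~\ref{p9}, where $x\notin\Ga\{z_\pm\}$. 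For~\ref{p9}: $\theta(\Tree,\Ga\{z_\pm\})=\tilde\Ga\,\theta(o,\{z_\pm\})$ follows from $\theta(s,az_\epsilon)=\rho(s)a\,\theta(o,z_\epsilon)$ (parts~\ref{p4}, \ref{p5}) and the $\tilde\Ga$-invariance of the left-hand side; $\beta$ is well defined because, by~\ref{p3}, the vertex or ray is determined by the point as soon as the $X$-coordinate avoids $\Ga\{z_\pm\}$; $\tilde\Ga$-equivariance of $\beta$ is immediate from~\ref{p4}--\ref{p6}; and its continuity is read off from~\ref{p7}, \ref{p8} together with the easy equivalence $\theta(\omega_n)\to\theta(\omega)\Leftrightarrow\omega_n\to\omega$.

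I expect the main obstacle to be part~\ref{p1}: both inclusions carry real dynamical weight, and closing the ``$\subseteq$'' descent depends on setting up the no-backtracking rule so that the partial products stay reduced (which kills the cocycle and keeps the coordinate bookkeeping tractable) and on having the uniform form of Lemma~\ref{lem:pingpong} available to absorb the moving base points $x_k$.
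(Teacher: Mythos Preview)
Your proposal is correct and the organisation largely matches the paper's, but your argument for the hard inclusion in part~(\ref{p1}) takes a genuinely different route.

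The paper does not realise points of $X_\Tree$ as sequential limits of $\Phi_{n_k}(x_k)$. Instead, given $\mathbf x=(x_t)_t\in X_\Tree\subset X^\Tree$, it defines an \emph{orientation map} $w\colon\Tree\to\Tree$ by $w(t)=t$ if $x_t\notin\Ga\{z_\pm\}$ and otherwise $w(t)=$ the unique neighbour of $t$ with $\ell(t,w(t))=x_t$. The key claim is purely edge-local: for adjacent $s,t$ one cannot have $w(s)\neq t$ and $w(t)\neq s$ simultaneously. This is proved algebraically by exhibiting $f,g\in C(X)$ with $(f^{(s)}g^{(t)})(\mathbf x)=1$ yet $Q(f^{(s)}g^{(t)})=\sigma_{\rho(s)}(f\,\sigma_{az}(g))=0$ in $C(X)^\cU$, since $(\supp f)\cap az_n(\supp g)=\emptyset$ eventually by axiality. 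That single local constraint forces $w$ to be globally coherent, and the three possible shapes of $w$ (sink at a vertex, sink at an end, a pair of adjacent vertices pointing at each other) give exactly $\theta(s,x_s)$, $\theta(\omega)$, and the double points of~(\ref{p3}).

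Your descent is a sequential reformulation of the same mechanism: the uniform ping-pong at stage~$m$ plays the role of the vanishing-of-products step, pinning down all coordinates off the forward branch at $t_m$. What the paper's version buys is that it needs no first-countability hypothesis on $X$ (your passage from the intersection formula $\bigcap_{A\in\cU}\overline{\bigcup_{n\in A}\Phi_n(X)}$ to ``$\mathbf x$ is a limit of some $\Phi_{n_k}(x_k)$'' requires $X^\Tree$ first-countable, or else nets throughout) and no uniform upgrade of Lemma~\ref{lem:pingpong}. What your version buys is a more explicitly dynamical picture of how the tree-graded structure emerges. One small slip worth fixing: your ``precisely when'' in the sequential description of $X_\Tree$ is not bidirectional a priori --- a limit along $n_k\to\infty$ disjoint from some $A\in\cU$ need not lie in $X_\Tree$ --- but you only actually use the two directions that hold (full-sequence limits land in $X_\Tree$; every point of $X_\Tree$ is a subsequential limit), so the argument goes through.
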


\begin{proof}
We prove (\ref{p3}). The proof of $(\ref{p2})$ is similar. 
Suppose that $s$ and $t$ are adjacent 
and $x=\ell(s,t)$ and $y=\ell(t,s)$. 
Then $\theta(s,x)_s=x=\ell(s,t)=\theta(t,y)_s$, 
$\theta(s,x)_t=\ell(t,s)=y=\theta(t,y)_t$, and 
$\theta(s,x)_r = \theta(r,s)=\theta(r,t)=\theta(t,y)_r$ 
for $r\in\Tree\setminus\{s,t\}$, because $s$ and $t$ 
are on the same side of $r$. 
This shows $\theta(s,x)=\theta(t,y)$. 
Conversely, let $(s,x)$ and $(t,y)$ be given. 
Suppose that there is $r\in\Tree$ strictly between $s$ and $t$. 
Then $\theta(s,x)\neq\theta(t,y)$ since 
$\theta(s,x)_r = \ell(r,s) \neq \ell(r,t) = \theta(t,y)_r$. 
Suppose next that $s$ and $t$ are adjacent. 
Then $\theta(s,x)_s=x$, $\theta(s,x)_t=\ell(t,s)$ 
and $\theta(t,y)_s=\ell(s,t)$, $\theta(t,y)_t=y$. 
Hence $\theta(s,x)=\theta(t,y)$ only if $x=\ell(s,t)$ and $y=\ell(t,s)$.
This proves (\ref{p3}). 

The assertions (\ref{p4}-\ref{p6}) 
follow from Lemma~\ref{lem:labelequiv}; 
E.g., for $a\in\Ga$ and $s\in\Tree\setminus\{ o\}$, one has
\[
(a\theta(s,x))_o=a\theta(s,x)_o=a\ell(o,s)=\ell(o,as)=\theta(as,x)_o
\]
and for $t\neq o$ 
\[
(a\theta(s,x))_t = \theta(s,x)_{a^{-1}t}
 = \left\{\begin{array}{ll}
     x & \mbox{ if $t=as$} \\
     \ell(a^{-1}t,s) = \ell(t,as) & \mbox{ if $t\neq as$} 
   \end{array}\right\}
 =\theta(as,x)_t.
\]

The ``if'' part of (\ref{p7}-\ref{p8}) are easy. 
For the ``only if'' part, suppose that 
$(\theta(t_n,x_n))_n$ converges to $\theta(s,x)$. 
If $(t_n)_n$ is converges to some point $s'$, 
then by the ``if'' part and the compactness of $X$, 
one has $\theta(t_n,x_n) \to \theta(s',y)$, 
where $y$ is the limit of $y_n \coloneq x_n$ if $t_n=s'$ and 
$y_n \coloneq \ell(s',t_n)$ if $t_n\neq s'$. 
Since $\theta(s',y)=\theta(s,x)$, one has by (\ref{p3}) 
either $s'=s$ or $s'$ is an adjacent point to $s$ 
that satisfies $x=\ell(s,s')$ and $y=\ell(s',s)$. 
This proves the ``only if'' part for the case $(t_n)_n$ is convergent. 
Now consider the case where $(t_n)_n$ is not convergent. 
By the previous case, the only limit points are 
either $s$ or the unique adjacent point $s'$ 
that satisfies $x=\ell(s,s')$. 
The other claimed conditions also follow. 
The rest of the proof of (\ref{p7}-\ref{p8}) is similar. 

We prove (\ref{p1}). 
We first prove that $\theta(\Tree,X)\subset X_\Tree$. 
Let $s\in\Tree$ and $x\in X\setminus\Ga\{z_\pm\}$ be given.
We consider the character $\chi$ on 
$C(X_\Tree)\subset C(X)^\cU$ 
arising from the sequence $(\vp_n(\rho(s))x)_n$. 
Then by Lemma~\ref{lem:pingpong} one has 
\[
\chi(f^{(t)}) = \lim\nolimits_{\cU} \sigma_{\vp_n(\rho(t))}(f)(\vp_n(\rho(s))x) 
 = \lim\nolimits_{\cU} f(\vp_n(\rho(t)^{-1}\rho(s))x) 
 = f(\ell(t,s))
\]
for $t\neq s$ and $\chi(f^{(s)})=f(x)$. 
This means $\chi$ corresponds to 
the point $\theta(s,x)$ in $X_\Tree$. 
Since $\theta(s,\,\cdot\,)\colon X\to X^\Tree$ is 
continuous and $X\setminus\Ga\{z_\pm\}$ is dense in $X$, 
one sees that $\theta(s,X)\subset X_\Tree$. 
Similarly for $\theta(\partial\Tree)$. 
It follows that $\theta(\Tree,X)\cup\theta(\partial\Tree)\subset X_\Tree$. 

It remains to prove the opposite inclusion. 
Let $\mathbf{x} \coloneq (x_t)_t\in X^\Tree$ be given and assume that 
$\mathbf{x} \in X_\Tree$. 
We define $w\colon \Tree \to \Tree$, 
``the orientation of $\mathbf{x}$ at $t$'', as follows. 
If $x_t \in X\setminus\Ga\{z_\pm\}$, then $w(t) \coloneq t$. 
If $x_t \in \Ga\{ z_\pm\}$, then we define $w(t)$ to be the unique 
point that is adjacent to $t$ and satisfies $\ell(t,w(t))=x_t$. 
We claim that if $s,t\in\Tree$ are adjacent, then 
it cannot happen that $w(s)\neq t$ and $w(t)\neq s$ simultaneously. 
Suppose for a contradiction that it happened. 
We may assume that $s$ is between $o$ and $t$ 
and $\rho(t)=\rho(s) a z$ with no cancellation. 
Thus one has $x_s \neq az_+$ and $x_t \neq z_-$. 
Let $f\in C(X)$ be such that 
$f(x_s)=1$ and $f=0$ on a neighborhood of $az_+$. 
Similarly let $g$ be such that $g(x_t)=1$ and $g=0$ 
on a neighborhood of $z_-$. 
We consider the function $f^{(s)}g^{(t)}$ on $X^\Tree$. 
It satisfies $(f^{(s)}g^{(t)})( \mathbf{x} ) = f(x_s)g(x_t) = 1$.
However, 
\[
(f^{(s)}g^{(t)})|_{X_\Tree} = Q(f^{(s)}g^{(t)}) 
 = \sigma_{\rho(s)}(f)\sigma_{\rho(t)}(g)
 = \sigma_{\rho(s)}( f \sigma_{az}(g) ) = 0
\]
since $(\supp f)  \cap az_n(\supp g) = \emptyset$ eventually. 
This means that $\mathbf{x} \in X^\Tree\setminus X_\Tree$, 
a contradiction. 
Thus we have proved the claim for adjacent 
points $s,t\in\Tree$ that $w(s)\neq t$ implies $w(t)=s$. 
This yields three possibilities. 
(1): There is $s\in \Tree$ such that $w$ is oriented to 
the $s$ direction. 
(2): There is $\omega\in\partial\Tree$ such that 
$w$ is oriented to the $\omega$ direction. 
(3): There is a pair $s_1,s_2$ of adjacent points such that 
$w$ is oriented to $\{s_1,s_2\}$ direction 
and $w(s_i)=s_j$ for $\{ i, j \}=\{1,2\}$. 
In the first case, one has $\mathbf{x}=\theta(s,x_s)$. 
In the second case, one has $\mathbf{x}=\theta(\omega)$. 
In the third case, one has $\mathbf{x}=\theta(s_1,\ell(s_1,s_2))=\theta(s_2,\ell(s_2,s_1))$.
It follows that $\theta(\Tree,X)\cup\theta(\partial\Tree)\supset X_\Tree$. 

Finally, the assertion (\ref{p9}) is a consequence of (\ref{p1}-\ref{p8}).
\end{proof}

\section{Proof of Theorem~\ref{thm:main}}
\begin{proof}[Proof of Theorem~\ref{thm:main}]
By Theorem 2.6 in \cite{robert}, $\mathrm{C}^*$-selflessness 
follows from the latter statement. 
Let $\Ga\acts X$ be an extreme boundary 
with an axial sequence $(z_n)_n$. 
(The reason for sticking with sequences 
rather than nets is purely aesthetic. 
If a countable group $\Ga$ has a topologically-free 
extreme boundary, then it has one which is second countable.)
%
%
We denote by $\pi$ the corresponding homomorphism 
from $\tilde{\Ga} \coloneq \Ga*\ip{z}$ into 
$\mathrm{C}^*_\lambda(\Ga)^\cU \subset \IB(\ell_2\Ga)^\cU$, 
which is viewed as a unitary representation on 
some Hilbert space $\cH$ 
via a faithful representation of $\IB(\ell_2\Ga)^\cU$. 
We take a $\Ga$-equivariant embedding 
$C(X)\subset\ell_\infty\Ga\subset\IB(\ell_2\Ga)$. 
It gives rise to a $\tilde{\Ga}$-equivariant embedding 
of $C(X_\Tree)$ into $\IB(\ell_2\Ga)^\cU$, 
where $X_\Tree$ is defined in the previous section. 
By (\ref{p9}) in Theorem~\ref{thm:space}, 
there is a unital $\tilde{\Ga}$-equivariant 
homomorphism from $C(\bar{\Tree})$ into 
the $\mathrm{C}^*$-alge\-bra $B(X_\Tree)$ 
of bounded Borel functions on $X_\Tree$, 
where $\tilde{\Ga}\theta(o,\{z_\pm\})$ is matched 
with some free $\tilde{\Ga}$-orbits in $\bar{\Tree}$. 
As the representation 
of $C(X_\Tree)$ on $\cH$ uniquely extends 
to a $\sigma$-normal representation of $B(X_\Tree)$, 
one finds a $\tilde{\Ga}$-equivariant representation of 
$C(\bar{\Tree})$ on $\cH$.  
Thus by Theorem~\ref{thm:repn} 
and the fact that $\pi|_\Ga$ is continuous 
on $\mathrm{C}^*_\lambda(\Ga)$, the representation 
$\pi$ is continuous on $\mathrm{C}^*_\lambda(\tilde{\Ga})$. 
\end{proof}

%
%
%
\section{Direct products of $\mathrm{C}^*$-selfless groups}\label{sec:dp}
It is not hard to see that if $A\subset B$ is an \emph{existential} 
embedding of $\mathrm{C}^*$-alge\-bras (see Section~1 in \cite{robert}), 
then so is $A\otimes C \subset B\otimes C$, provided that 
the $\mathrm{C}^*$-alge\-bra $C$ is \emph{exact}. 
(Without the exactness assumption, 
the canonical embedding $A^\cU\otimes C \subset (A\otimes C)^\cU$
is not continuous w.r.t.\ the minimal tensor product. See \cite{bo}.) 
Hence, if $A_i\subset B_i$ are existential embedding and $B_i$ are exact, 
then the corresponding 
embedding $A_1\otimes A_2 \subset B_1\otimes B_2$ is existential. 
The same holds true in the $\mathrm{C}^*$-prob\-a\-bility space setting. 

We denote by $(\cT,\omega)$ the Toeplitz $\mathrm{C}^*$-prob\-a\-bility space 
consisting of the Toeplitz algebra generated 
by the unilateral shift $T$ on $\ell_2(\{0,1\,\ldots\})$ 
and the vacuum state $\omega$ associated with the vacuum vector $\delta_0$. 
We denote by $(\cC_1,\tau)\subset(\cT,\omega)$ 
the $\mathrm{C}^*$-prob\-a\-bility space generated by $s \coloneq (T+T^*)/2$ in $\cT$. 
Note that $s$ corresponds to the identity function under the 
isomorphism $(\cC_1,\tau) \cong (C([-1,1]),\gamma)$, 
where the state $\gamma$ is given by 
by the semicircle probability measure $\frac{2}{\pi}\sqrt{1-s^2}\,\mathrm{d}s$. 
See Section 2.6 in \cite{vdn}.

We denote by $\bF_\infty$ the free group on 
generators $\{ t_i \}_{i=1}^\infty$. We view it as embedded 
in the reduced group $\mathrm{C}^*$-alge\-bra 
$\mathrm{C}^*_\lambda(\bF_\infty)$ acting on $\ell_2\bF_\infty$. 
It forms the $\mathrm{C}^*$-prob\-a\-bility space 
$(\mathrm{C}^*_\lambda(\bF_\infty),\tau)$, 
where $\tau$ is the canonical tracial state 
that is given by $\tau(g)=\delta_{g,1}$ for $g\in\bF_\infty$.
It is well-known (\cite{vdn}) that the sequence 
\[
s_n = (8n)^{-1/2}\sum_{i=1}^n (t_i+t_i^{-1})
\]
in $(\mathrm{C}^*_\lambda(\bF_\infty),\tau)$ 
\emph{strongly converges} to $s$ in $(\cC_1,\tau)$. 
Namely, for every polynomial $p$, one has 
$\tau(p(s_n))\to\gamma(p(s))$ and $\|p(s_n)\|\to \|p(s)\|$. 
We will prove a vectorial version of this fact in Theorem~\ref{thm:crossedproduct}. 
Let $(A,\vp)$ be a $\mathrm{C}^*$-prob\-a\-bility space on which $\bF_\infty$ acts 
(leaving $\vp$ invariant). 
Then the reduced crossed product $A\rtimes_\mathrm{r}\bF_\infty$ 
is equipped with the canonical state, still denoted by $\vp$, that is given by 
$\vp( a g) = \vp(a)\tau(g)$ for $a\in A$ and $g\in\bF_\infty$. 

Let $(A,\vp)$ be a $\mathrm{C}^*$-prob\-a\-bility space. 
We view $A$ as embedded in the ultrapower 
$(A,\vp)^\cU \coloneq (A^\cU,\vp^\cU)$ diagonally. 
(The difference in the free ultrafilter $\cU$ will have no effect on the discussion.) 
Consider a bounded sequence $(a_n)_{n=1}^\infty$ of self-adjoint elements 
in $(A,\vp)$ that is strongly convergent to $a$ in $(\mathrm{C}^*(\{1,a\}), \psi)$. 
We say the sequence $(a_n)_{n=1}^\infty$ is \emph{strongly asymptotically free} 
(w.r.t.\ $\cU$) from a $\mathrm{C}^*$-subalgebra $A_0$ if the induced homomorphism 
\[
(A_0,\vp|_{A_0})\ast(\mathrm{C}^*(\{1,a\}),\psi)\to (A^\cU,\vp^\cU)
\]
is well-defined and continuous on the reduced free product. 
Note that the reduced free product is compatible with embedding  (see \cite{bd} or Corollary 4.8.4 in \cite{bo}.

\begin{thm}\label{thm:crossedproduct}
Let $(A,\vp)$ be a $\mathrm{C}^*$-prob\-a\-bility space on 
which $\bF_\infty$ acts by $\sigma$. 
Assume that 
\[
\forall a\in A\quad \frac{1}{2n}\sum_{i=1}^n (\sigma_{t_i}(a)+\sigma_{t_i}^{-1}(a)) \to \vp(a).
\]
Then, the sequence $(s_n)_{n=1}^\infty$ as above in $A\rtimes_\mathrm{r}\bF_\infty$ 
is strongly asymptotically free from $A$ (in fact from $A\rtimes_\mathrm{r}\bF_\infty$).
\end{thm}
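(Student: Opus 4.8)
The plan is to prove the stronger statement, that $(s_n)_n$ is strongly asymptotically free from $B\coloneq A\rtimes_\mathrm{r}\bF_\infty$; strong asymptotic freeness from $A$ then follows at once, since the reduced free product is compatible with the inclusion $A\subset B$. Identifying $\cC_1\cong(C([-1,1]),\gamma)$ and using the already-noted strong convergence $s_n\to s$, the homomorphism $\Phi\colon(B,\vp)\ast(\cC_1,\tau)\to(B^\cU,\vp^\cU)$ with $b\mapsto b$ and $s\mapsto[s_n]_n$ is well-defined on the full free product — its value at $s$ lies in the image of $\cC_1$ because $\limsup_n\|s_n\|=1$ — and the whole content is that $\Phi$ is continuous for the reduced free product norm. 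As $\cC_1$ is singly generated, this reduces to the operator-norm bound
\[
\lim_{n\to\cU}\bigl\|\,b_0\,p_1(s_n)\,b_1\cdots p_k(s_n)\,b_k\,\bigr\|_{B}\ \le\ \bigl\|\,b_0\,p_1(s)\,b_1\cdots p_k(s)\,b_k\,\bigr\|_{B\ast_\mathrm{r}\cC_1}
\]
for all $k$, all $b_0,\dots,b_k\in B$ and all polynomials $p_1,\dots,p_k$.

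First I would set up both sides geometrically. On the right, represent $B\ast_\mathrm{r}\cC_1$ on its GNS space $\cH$ for $\vp\ast\gamma$: in the canonical alternating free-product decomposition of $\cH$, the element $s$ acts as $\tfrac12(\ell+\ell^*)$ for a partial isometry $\ell$ that ``creates a $\cC_1$-letter'' (its source and range projections are the structural projections of the decomposition), while $B$ acts by the amplified left regular representation — the standard Fock-space picture of a scalar semicircular element free from $B$. On the left, represent $B$ on $\ell_2(\bF_\infty)\otimes L^2(A,\vp)$, where each $t_i$ acts by a Koopman-twisted shift, so that $s_n=(8n)^{-1/2}\sum_{i=1}^n(t_i+t_i^{-1})$ is $(8n)^{-1/2}$ times a sum of $2n$ such ``edge'' operators. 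The decisive geometric fact is that the Cayley graph of $\bF_\infty$ is a tree: at any vertex exactly one of the $2n$ hops backtracks while the remaining ones open up mutually orthogonal half-spaces — the combinatorics behind Kesten's $\|\sum_{i=1}^n(t_i+t_i^{-1})\|=2\sqrt{2n-1}$, hence behind $\|s_n\|\to1$, which makes a normalised sum of $2n$ nearly orthogonal hops behave, as $n\to\cU$, like a single creation-plus-annihilation operator.

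Next I would transfer this to words with coefficients. Multiplying out $b_0p_1(s_n)b_1\cdots p_k(s_n)b_k$ into monomials in the edge operators interleaved with the $b_\ell$, the ``tree-type'' monomials — where the generator letters from successive $p_j(s_n)$ pair off with tree-neighbours — reassemble, after normalisation, into the action of $b_0p_1(s)b_1\cdots p_k(s)b_k$ on $\cH$, while in any other monomial some coefficient $b_\ell$ (really its $A$-part, after carrying the $\bF_\infty$-part through) gets caught between genuinely distinct group letters, and there the hypothesis
\[
\forall a\in A\quad \frac1{2n}\sum_{i=1}^n\bigl(\sigma_{t_i}(a)+\sigma_{t_i}^{-1}(a)\bigr)\to\vp(a)
\]
collapses the summed-and-normalised effect of $a$ to the scalar $\vp(a)$. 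In other words, the hypothesis is exactly what makes $A$, and hence all of $B$, behave against the $s_n$-directions as if freely independent from the limiting semicircular element, reorganising the remaining monomials into precisely the relations that define $B\ast_\mathrm{r}\cC_1$. The cleanest packaging I can think of is to build, for each $n$, an asymptotically isometric map $W_n$ of $\cH$ into $(\ell_2(\bF_\infty)\otimes L^2(A,\vp))^\cU$ intertwining the Fock-space representation with $\Phi$ up to errors that vanish along $\cU$, and whose image asymptotically exhausts the Hilbert space through a direct sum of translates; this would yield the displayed bound.

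The main obstacle will be exactly that last upgrade, from $\ast$-moments to operator norms. At the level of moments — the values of $\vp^\cU$ on words — the hypothesis already gives, by the free central limit theorem with $B$-valued coefficients, that $[s_n]_n$ is a scalar semicircular element free from $B$ in $(B^\cU,\vp^\cU)$; this soft half also yields the reverse of the displayed inequality (apply the GNS representation of $\vp^\cU$ to $\mathrm{C}^*(B,[s_n]_n)$, whose cyclic subspace carries the reduced-free-product representation), so that once continuity is established the embedding $(A\rtimes_\mathrm{r}\bF_\infty)\ast_\mathrm{r}\cC_1\hookrightarrow B^\cU$ is automatically isometric. But $\vp^\cU$ is far from faithful on $B^\cU$, so no norm information is available for free, and one is genuinely proving an operator-coefficient strengthening of the strong convergence of a semicircular system. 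The mechanism I expect to rely on is the one already visible in Kesten's norm computation: the tree structure of $\bF_\infty$ makes the relevant operator sums ``orthogonal enough'' that row/column estimates of the shape $\|\sum_iR_i\|=\|\sum_iR_i^*R_i\|^{1/2}$ (valid when the $R_i$ have pairwise orthogonal ranges) apply uniformly in $n$, while the averaging hypothesis is precisely what annihilates the non-orthogonal cross-terms — the ones carrying the $A$-coefficients — that would otherwise ruin the bound.
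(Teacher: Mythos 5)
Your reduction of the statement to the operator-norm inequality
\[
\lim_{n\to\cU}\bigl\|\,b_0\,p_1(s_n)\,b_1\cdots p_k(s_n)\,b_k\,\bigr\|\ \le\ \bigl\|\,b_0\,p_1(s)\,b_1\cdots p_k(s)\,b_k\,\bigr\|_{(B,\vp)\ast(\cC_1,\tau)}
\]
is the right formulation of what has to be shown, and the ingredients you list (the tree structure behind Kesten's norm computation, the averaging hypothesis collapsing $A$-coefficients to scalars) are indeed the relevant ones. But the proposal stops exactly where the proof has to begin: the passage from $\ast$-moments to operator norms is announced as ``the main obstacle'' and then addressed only by ``the mechanism I expect to rely on'' and ``the cleanest packaging I can think of.'' The combinatorial expansion you sketch --- tree-type monomials reassembling into the Fock-space action, the remaining monomials being killed --- is precisely the part that is delicate: a na\"{\i}ve monomial expansion of $b_0p_1(s_n)b_1\cdots p_k(s_n)b_k$ produces on the order of $n^{\deg p_1+\cdots+\deg p_k}$ terms against a normalisation of only $n^{-(\deg p_1+\cdots+\deg p_k)/2}$, so triangle-inequality bookkeeping cannot work and the claimed intertwiners $W_n$ must be constructed and their asymptotic isometry and intertwining errors actually estimated. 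None of this is done, so as it stands the argument establishes (at best) freeness at the level of $\vp^\cU$-moments, which, as you yourself note, gives no norm information because $\vp^\cU$ is not faithful on $B^\cU$.

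For comparison, the paper's proof sidesteps the entire combinatorial difficulty. Using the Haagerup--Pisier decomposition $t_i=u_{i,+}+u_{i,-}^*$ into partial isometries with orthogonal ranges $P_{i,\pm}$, one writes $s_n=(T_n+T_n^*)/2$ with $T_n=(2n)^{-1/2}\sum_i(u_{i,+}+u_{i,-})$ and $T_n^*T_n=1-(2n)^{-1}\sum_i(P_{i,+}+P_{i,-})\to 1$; the averaging hypothesis then shows, in a single norm computation on $L^2(A,\vp)\otimes\ell_2(\bF_\infty)$, that $T\coloneq[1\otimes T_n]_n$ is an exact isometry in the ultrapower satisfying $T^*\pi(a)T=\vp(a)$. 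The $\mathrm{C}^*$-algebra generated by $A$ and such an isometry is a quotient of the Toeplitz--Pimsner algebra $\cT_{A,\vp}$, which is already isomorphic to the reduced free product $(A,\vp)\ast(\cT,\omega)$ because its canonical conditional expectation onto $A$ is non-degenerate; since $[s_n]_n=(T+T^*)/2$ and $\cC_1=\mathrm{C}^*(s)\subset\cT$, the norm continuity you are after comes for free from this universal-property argument, with no word-by-word estimates at all. If you want to salvage your route, the honest advice is to abandon the monomial expansion and instead look for an exact algebraic relation satisfied by $[s_n]_n$ (or a completion of it to an isometry) that pins down the reduced free product by universality.
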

\begin{proof}
The proof is inspired from \cite{vdn} and \cite{hp}. 
We can write $\bF_\infty$ as a disjoint union 
\[
\bF_\infty=\{1\}\cup\bigcup_i (E_i^+\cup E_i^{-}),
\]
where $E_i^\pm$ is the set of those elements  
whose reduced form starts with (a positive power of) $t_i^\pm$. 
We denote by $P_{i,\pm}$ the orthogonal projection on $\ell_2\bF_\infty$ 
that corresponds to $E_i^\pm$. They are mutually orthogonal. 
One has
$t_i = P_{i,+}t_i + t_iP_{i,-} \eqcolon u_{i,+} + u_{i,-}^*$ (see Proof of Proposition 1.1 in \cite{hp}). 
Note that $u_{i,\pm}$ are partial isometries such that 
$u_{i,\pm}^*u_{i,\pm}=1-P_{i,\mp}$ and $u_{i,\pm}u_{i,\pm}^*=P_{i,\pm}$. 
Hence, for 
\[
T_n \coloneq (2n)^{-1/2}\sum_{i=1}^n (u_{i,+}+u_{i,-}),
\] 
one has $s_n = (T_n+T_n^*)/2$ and 
$T_n^*T_n = 1 - (2n)^{-1}\sum_{i=1}^n (P_{i,-}+P_{i,+}) \approx 1$.

We represent 
$A\rtimes_\mathrm{r}\bF_\infty$ on 
the Hilbert space 
$\cH \coloneq L^2(A,\vp)\otimes\ell_2\bF_\infty$ by 
$g\mapsto (1\otimes g)$ for $g\in \bF_\infty$ and 
$a\mapsto \pi(a)$, 
$\pi(a)(\xi\otimes\delta_h) = \sigma_h^{-1}(a)\xi \otimes\delta_h$, 
for $a\in A$. 
Then, by the assumption, 
\[
(1\otimes T_n)^*\pi(a)(1\otimes T_n) 
= \pi(\frac{1}{2n}\sum_{i=1}^n (\sigma_{g_i}^{-1}(a)(1-P_{i,-})+\sigma_{g_i}(a)(1-P_{i,+}))
\to \vp(a)
\]
for $a\in A$. 
Hence $T \coloneq [(1\otimes T_n)]_n \in \IB(\cH)^\cU$ 
is an isometry that satisfies 
$T^*\pi(a) T=\vp(a)$ for $a\in A$. 
We denote by $\omega$ the vector state 
on $\IB(\cH)$ associated 
with the vector $\hat{1}\otimes\delta_1$. 
Thus 
$(A,\vp)\subset(\IB(\cH),\omega)$. 
The isometry $T$ in $(\IB(\cH),\omega)^\cU$ 
generates a copy of the Toeplitz probability space $(\cT,\omega)$. 
In particular, $[s_n]_n = (T+T^*)/2$ generates a copy of 
$(\cC_1,\tau)$ in $(A\rtimes\bF_\infty,\vp)^\cU$.  
It is rather easy to see that $A$ and $\mathrm{C}^*(T)$ are 
free from each other w.r.t.\ $\omega^\cU$. 
It remains to show continuity of the induced homomorphism 
from the reduced free product $(A,\vp)\ast(\cT,\omega)$ 
into $(\IB(\cH),\omega)^\cU$. 

The universal $\mathrm{C}^*$-alge\-bra generated by $A$ and an isometry $T$ 
that satisfy $T^*aT=\vp(a)$ is the Toeplitz--Pimsner algebra $\cT_{A,\vp}$ 
over the Hilbert $A$-module $\cspan ATA$ 
(see, e.g., Example 4.6.11 in \cite{bo}). 
Since the canonical conditional expectation from $\cT_{A,\vp}$ 
onto $A$ is non-degenerate (Theorem 4.6.6 in \cite{bo}), 
the Toeplitz--Pimsner algebra 
$\cT_{A,\vp}$ is isomorphic to the reduced free product 
$(A,\vp)*(\cT,\omega)$. 
This proves the desired continuity.
\end{proof}

\begin{proof}[Proof of Theorem~\ref{thm:tensor}]
By Theorem 4.1 in \cite{robert}, we may assume that $i\in\{1,2\}$. 
Since $(A_i,\vp_i)$ are selfless, the embeddings 
\[
(A_i\otimes\vp_i) 
 \subset (A_i,\vp_i)\ast(\mathrm{C}^*_\lambda(\bF_\infty),\tau)
 \eqcolon (B_i,\psi) \cong (\bigast_{\bF_\infty} (A_i,\vp_i))\rtimes_{\mathrm{r}}\bF_\infty
\] 
are existential (Theorem 2.6 in \cite{robert}). 
The $\mathrm{C}^*$-alge\-bras $B_i$ are exact by Dykema's theorem (\cite{dykema} or Corollary 4.8.3 in \cite{bo}).
Hence the embedding of 
$(A_1\otimes A_2,\vp_1\otimes\vp_2) $ into $(B_1\otimes B_2,\psi_1\otimes\psi_2)$ 
is existential. 
We write $\bF_\infty^{(i)}$ for the copy of $\bF_\infty$ in $B_i$. 
Note that 
\begin{align*}
(B_1\otimes B_2,\psi_1\otimes\psi_2) 
 &\cong (\bigast_{\bF_\infty^{(1)}} (A_1,\vp_1) \otimes \bigast_{\bF_\infty^{(2)}} (A_2,\vp_2))\rtimes_{\mathrm{r}}(\bF_\infty^{(1)}\times\bF_\infty^{(2)})\\
 &\supset (A \rtimes_{\mathrm{r}}\bF_\infty,\vp),
\end{align*}
where $(A,\vp) \coloneq \bigast_{\bF_\infty^{(1)}} (A_1,\vp_1) \otimes \bigast_{\bF_\infty^{(2)}}(A_2,\vp_2)$ 
and 
$\bF_\infty\subset \bF_\infty^{(1)}\times\bF_\infty^{(2)}$ is the diagonal. 
We will verify that the assumption of Theorem~\ref{thm:crossedproduct} 
is satisfied. 
Once this is done, the proof of Theorem~\ref{thm:tensor} is complete by 
Lemma 1.2, Lemma 2.4, and Theorem 2.6 in \cite{robert}. 
Since $\ker\vp$ is densely spanned by those elements 
$a_1\otimes 1$, $1\otimes a_2$, and $a_1\otimes a_2$, 
where $a_i\in\ker\vp\cap \bigast_{F} (A_i,\vp_i)$ 
for some finite subset $F\subset \bF_\infty^{(i)}$,
the assumption of Theorem~\ref{thm:crossedproduct} follows from 
Voiculescu's inequality (\cite{voiculescu}, and \cite{junge} for the vectorial version) 
that 
\[
\| \sum_{j=1}^m x_j \otimes y_j \| \le 3(\sum_{j=1}^m \|x_j\|^2\|y_j\|^2)^{1/2} 
\precsim m^{1/2}\max \|x_j\|\|y_j\|
\]
whenever $x_j$'s (or $y_j$'s) are freely independent and mean zero. 
\end{proof}
\section{Completely selfless $\mathrm{C}^*$-alge\-bras}\label{sec:cs}
In this section, we indicate the way to circumvent 
the exactness assumption in Theorem~\ref{thm:tensor}. 
Recall that an embedding $A\subset B$ of $\mathrm{C}^*$-alge\-bras 
is said to be \emph{existential} if there is an ultrafilter $\cU$ and 
an embedding $\sigma\colon B\hookrightarrow A^\cU$ 
whose restriction to $A$ is the diagonal embedding of $A$ into $A^\cU$. 
We say that the embedding is \emph{completely existential} if 
there is $\sigma$ as above that moreover satisfies that 
for every $\mathrm{C}^*$-alge\-bra $C$ 
the induced homomorphism $B\otimes C \to (A\otimes C)^\cU$ 
is continuous. By the Effros--Haagerup lifting theorem, 
the additional condition is equivalent to that 
the homomorphism $\sigma$ is locally liftable to 
completely positive contractions into the $\ell_\infty$-sum $\prod A$. 
Namely, $A\subset B$ is completely existential if and only if 
for every finite dimensional operator system $E\subset B$ 
and $\ve>0$, there is a unital completely positive 
map $\theta\colon E\to A$ satisfying that 
$\|\theta(x) \| \geq (1-\ve)\|x\|$ for $x\in E$, 
$\|\theta(xy) - \theta(x)\theta(y)\|<\ve$ 
for those $x,y\in E$ such that $xy\in E$, 
and $\|\theta(a) -a \|<\ve$ for $a\in E\cap A$. 
In the $\mathrm{C}^*$-prob\-a\-bility space 
setting $(A,\vp)\subset(B,\psi)$, it is moreover required 
that $\|\vp\circ\theta - \psi|_E\|<\ve$.
It is not hard to see that the class of completely existential 
embeddings are closed under compositions and tensor products. 
It is also closed under reduced free products (cf.\ Corollary 1.9 in \cite{robert}).

\begin{lem}
For every completely existential embedding $(A,\vp)\subset (B,\psi)$ of 
$\mathrm{C}^*$-prob\-a\-bility spaces and 
every $(C,\rho)$,  
the embedding $(A,\vp)\ast(C,\rho)\subset (B,\psi)\ast(C,\rho)$ 
is completely existential. 
\end{lem}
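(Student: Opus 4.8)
The plan is to reduce the statement to the characterization of complete existentiality via unital completely positive maps recalled just above, and then lift such maps from $(A,\vp)$ to the reduced free product $(A,\vp)\ast(C,\rho)$ ``diagonally'', leaving the $(C,\rho)$ factor untouched. Concretely, let $E\subset (B,\psi)\ast(C,\rho)$ be a finite-dimensional operator system and $\ve>0$. Since the reduced free product is spanned by alternating products of centered elements from $B$ and from $C$, I would first enlarge $E$ to a finite-dimensional operator system $E'$ that is still finite-dimensional but ``closed enough'' under the relevant partial multiplication, and such that $E'$ is contained in the algebraic span of $E_B\cdot E_C\cdots$ for finite-dimensional operator systems $E_B\subset B$ and $E_C\subset C$ (taking $E_C$ to contain $1_C$, and $E_B$ to contain $1_B$ and $E_B\cap A$ large). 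By hypothesis there is a u.c.p.\ map $\theta_0\colon E_B\to A$ with $\|\theta_0(x)\|\ge(1-\delta)\|x\|$, $\|\theta_0(xy)-\theta_0(x)\theta_0(y)\|<\delta$ whenever $x,y,xy\in E_B$, $\|\vp\circ\theta_0-\psi|_{E_B}\|<\delta$, and $\theta_0|_{E_B\cap A}\approx\id$, for a $\delta$ to be chosen.

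Next I would assemble $\theta_0$ with the identity on $C$ into a map on $E'$. The cleanest route is functoriality of the reduced free product for u.c.p.\ maps: by the Boca/Blanchard--Dykema result (the free-product analogue of the fact that reduced free products are compatible with unital completely positive maps, cf.\ Corollary 1.9 in \cite{robert} and the reference \cite{bd} in the excerpt), the pair $(\theta_0,\id_C)$ induces a u.c.p.\ map
\[
\Theta\colon (B,\psi)\ast(C,\rho)\to (A,\vp)\ast(C,\rho),\qquad \Theta|_C=\id_C,
\]
provided $\theta_0$ were actually a state-preserving $*$-homomorphism; since it is only approximately so, $\Theta$ will only be a u.c.p.\ map that is ``approximately multiplicative and state-preserving on $E'$''. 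I would then set $\theta\coloneq \Theta|_{E}\colon E\to (A,\vp)\ast(C,\rho)$ and verify the four required estimates. Approximate multiplicativity on $E$ and near-invariance of $\psi\ast\rho$ follow from the corresponding properties of $\theta_0$ together with the freeness of the two factors, because in the GNS picture of the reduced free product an alternating word is computed factor-by-factor, so a $\delta$-perturbation in finitely many $B$-letters produces an $O(\delta)$-perturbation of the word (the length of words occurring in $E'$ being bounded). That $\theta$ is $(1-\ve)$-isometric on $E$ is the delicate point: one must see that $\Theta$ does not collapse norms, which again uses that the reduced free product norm of a fixed finite configuration is a continuous function of the ``local data'' (the values and products of the $B$-letters, paired against $C$-letters through $\psi$ and $\rho$), so that the $(1-\delta)$-isometry of $\theta_0$ upgrades to near-isometry of $\Theta$ on $E$.

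The main obstacle I anticipate is exactly this last continuity/stability estimate for the reduced-free-product norm under an approximately multiplicative, approximately state-preserving perturbation of one factor — i.e.\ turning ``$\theta_0$ is $\delta$-close to a state-preserving homomorphism on a finite-dimensional operator system'' into ``$\Theta$ is $\ve$-close to an isometric homomorphism on $E$''. The natural way to handle it is to avoid perturbative bookkeeping entirely: use the characterization of complete existentiality via local liftability to c.p.\ contractions into $\prod A$, so that $\theta_0$ corresponds to an honest state-preserving $*$-homomorphism $(B,\psi)\to (A,\vp)^\cU$ restricting to the diagonal on $A$; apply the genuinely functorial reduced free product (which \emph{is} exact on honest morphisms, by \cite{bd}) to get a state-preserving $*$-homomorphism $(B,\psi)\ast(C,\rho)\to (A,\vp)^\cU\ast(C,\rho)$; and finally identify $(A,\vp)^\cU\ast(C,\rho)$ with (a subalgebra of) $\bigl((A,\vp)\ast(C,\rho)\bigr)^\cU$ via the canonical map, which is continuous on the reduced free product precisely because the reduced free product is compatible with embeddings — the same fact quoted in the excerpt before Theorem~\ref{thm:crossedproduct}. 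The restriction of $A^\cU\ast(C,\rho)$-into-$((A\ast C)^\cU)$ to $A$ (resp.\ $C$) is the diagonal embedding, so the composite exhibits $(A,\vp)\ast(C,\rho)\subset(B,\psi)\ast(C,\rho)$ as completely existential once one checks the analogous statement for tensoring with an arbitrary $(D,\cdot)$ — but that reduces to the displayed fact that complete existentiality is closed under tensor products together with the identification $\bigl((A\ast C)\otimes D\bigr)^\cU\supset (A\otimes D)^\cU\ast(C\otimes D)$ obtained by combining the free-product and tensor-product compatibilities. This reduces everything to two already-cited compatibility results and removes all $\ve$-$\delta$ juggling.
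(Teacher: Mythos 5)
Your first, perturbative route is rightly abandoned, and the operative argument is the second one; it has a reasonable shape (replace the approximate u.c.p.\ data by an honest state-preserving $*$-homomorphism $\sigma\colon(B,\psi)\to(A,\vp)^\cU$ restricting to the diagonal on $A$, apply functoriality of the reduced free product, then commute the free product with the ultrapower). But the crucial last step is misattributed, and as justified it is a genuine gap. You claim that the canonical map
\[
(A,\vp)^\cU\ast(C,\rho)\longrightarrow\bigl((A,\vp)\ast(C,\rho)\bigr)^\cU
\]
is continuous ``because the reduced free product is compatible with embeddings''. The Blanchard--Dykema compatibility theorem does not apply here: $\bigl((A,\vp)\ast(C,\rho)\bigr)^\cU$ is not presented as a reduced free product of two $\mathrm{C}^*$-prob\-a\-bility spaces containing $(A^\cU,\vp^\cU)$ and $(C,\rho)$ as the free factors. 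What freeness of $A^\cU$ and $C$ inside the ultrapower gives you for free is a surjection in the \emph{opposite} direction, from $\mathrm{C}^*(A^\cU\cup C)\subset\bigl((A,\vp)\ast(C,\rho)\bigr)^\cU$ onto the reduced free product (namely the GNS representation of the ultrapower state restricted to this subalgebra); this yields only the inequality $\|x\|_{\mathrm{red}}\le\|x\|_{(A\ast C)^\cU}$ for $x$ in the algebraic free product, which is the wrong direction for continuity of the map you want. The inequality you need is exactly a strong-convergence statement for reduced free products; it is the content of the Skoufranis--Pisier theorem (Theorem 7.1 in \cite{pisier}), which is precisely the key input in the paper's proof and the reason the lemma is not a formal consequence of the two ``compatibility'' facts you cite. (Compare the tensor-product analogue: $A^\cU\otimes C\subset(A\otimes C)^\cU$ fails to be continuous without exactness of $C$, even though minimal tensor products are ``compatible with embeddings''.)

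For the record, the paper's proof does not pass through an honest homomorphism into $A^\cU$ at all: it keeps the finite-dimensional u.c.p.\ maps $\theta_n\colon E_n\to A$, extends them to u.c.p.\ maps $B\to\IB(L^2(A,\vp))$, uses the Choda--Blanchard--Dykema theorem for state-preserving u.c.p.\ maps to form $\theta_n\ast\id_C$ on $(B_n,\psi_n)\ast(C,\rho)$ (where $B_n$ is the quotient of $B$ by the GNS-kernel of $\psi_n\coloneq\omega\circ\theta_n$), and then invokes Pisier's theorem to obtain a continuous and \emph{locally liftable} embedding of $(B,\psi)\ast(C,\rho)$ into $\prod_n(B_n,\psi_n)\ast(C,\rho)/\cU$. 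That local liftability is also what delivers the ``complete'' part of the conclusion via Effros--Haagerup; your closing attempt to deduce completeness by tensoring with an arbitrary $D$, via the inclusion $\bigl((A\ast C)\otimes D\bigr)^\cU\supset(A\otimes D)^\cU\ast(C\otimes D)$, is not meaningful as written (the relevant object would be an amalgamated free product over $D$, and the inclusion is unjustified) and is not needed once local liftability is in hand.
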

\begin{proof}
Wthout loss of generality, we assume that $B$ is separable. 
Take an increasing sequence of finite dimensional operator 
systems $E_n\subset B$ such that $\bigcup E_n$ is dense in $B$ 
and $\bigcup (E_n\cap A)$ is dense in $A$. 
Since the embedding $(A,\vp)\subset(B,\psi)$ 
is completely existential, for each $n$, there is a unital completely positive map $\theta_n\colon E_n\to A$ 
that is approximately isometric, approximately multiplicative, 
approximately state preserving, and $\theta_n|_{E_n\cap A}\approx\id_{E_n\cap A}$.
It extends to a unital completely positive 
map, still denoted by $\theta_n$, 
from $B$ into $\IB(L^2(A,\vp))$. 
We denote by $\omega$ the vector state that corresponds to the 
GNS-vector $\hat{1}$ in $L^2(A,\vp)$. 
One has $\psi_n \coloneq \omega\circ\theta_n\to\psi$ pointwise on $B$. 
Let $B_n$ denote the quotient of $B$ by the GNS-kernel of $\psi_n$. 
By the Choda--Blanchard--Dykema theorem (\cite{bd} 
or Theorem 4.8.5 in \cite{bo}), $\theta_n\ast\id_C$ 
defines a unital completely positive map from 
$(B_n,\psi_n)\ast(C,\rho)$ into $(\IB(L^2(A,\vp)),\omega)\ast(C,\rho)$. 
By the Skoufranis--Pisier theorem 
(Theorem 7.1 in \cite{pisier}) for amalgamated free products, 
the (a priori formally defined) embedding 
\[
(B,\psi)\ast (C,\rho) \hookrightarrow \prod_{n\in \IN}(B_n,\psi_n)\ast(C,\rho)/\cU
\]
is continuous and locally liftable. 
By composing this with $[\theta_n\ast\id_C]_n$, 
one obtains a locally liftable embedding 
of $(B,\psi)\ast (C,\rho)$ into $((\IB(L^2(A,\vp)),\omega)\ast(C,\rho))^\cU$, 
which lands in $((A,\vp)\ast(C,\rho))^\cU$. 
This witnesses existentialness. 
\end{proof}

We say a $\mathrm{C}^*$-prob\-a\-bility space 
\emph{completely selfless} if the ``complete'' analogue of 
the equivalent conditions 
in Theorem 2.6 in \cite{robert} hold. 
A group is \emph{completely $\mathrm{C}^*$-selfless} if its 
reduced group $\mathrm{C}^*$-alge\-bra is completely selfless.
Note that no nuclear tracial $\mathrm{C}^*$-prob\-a\-bility 
space is completely selfless, 
since if it were, then the tracial 
state on $\mathrm{C}^*_\lambda(\bF_\infty)$ 
would be \emph{amenable} (see Chapter 6 in \cite{bo}), 
which is absurd. 
Theorems~\ref{thm:main} and \ref{thm:tensor} 
are upgraded as follows. 

\begin{thm}
An infinite countable discrete group having 
a topologically-free extreme boundary is completely 
$\mathrm{C}^*$-selfless. 
The tensor product of separable and completely selfless 
$\mathrm{C}^*$-prob\-a\-bility spaces is completely selfless. 
In particular, the class of countable discrete groups that 
are completely $\mathrm{C}^*$-selfless is closed under 
direct product.
\end{thm}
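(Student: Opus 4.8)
The two statements are the ``complete'' versions of Theorems~\ref{thm:main} and~\ref{thm:tensor}, and the plan is to revisit those two proofs and check that every embedding they produce is in fact \emph{completely} existential. The advantage over the proof of Theorem~\ref{thm:tensor} is that the exactness hypothesis disappears: one replaces the appeal to Dykema's exactness theorem together with the tensoriality of existential embeddings for exact targets by the \emph{unconditional} tensoriality of completely existential embeddings, and one uses in addition the Lemma above (completely existential embeddings are stable under reduced free products with an arbitrary $\mathrm{C}^*$-prob\-a\-bility space) and the ``complete'' analogues of Lemma~1.2, Lemma~2.4, Theorem~2.6, and Theorem~4.1 of \cite{robert}, each of which is the verbatim locally liftable version of its original.

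For the first statement it suffices, by the complete analogue of Theorem~2.6 in \cite{robert} and the characterization of completely existential embeddings recalled in this section, to prove that for every $\mathrm{C}^*$-alge\-bra $C$ the homomorphism
\[
\mathrm{C}^*_\lambda(\tilde{\Ga})\otimes_{\min}C\to(\mathrm{C}^*_\lambda(\Ga)\otimes_{\min}C)^\cU
\]
is continuous and restricts to the diagonal embedding on $\mathrm{C}^*_\lambda(\Ga)\otimes_{\min}C$. This is the proof of Theorem~\ref{thm:main} read with an auxiliary coefficient algebra. Fixing a faithful representation $\cH_C$ of $C$, apply Theorem~\ref{thm:repn} with $\La\coloneq\tilde{\Ga}$ acting on $\Tree\coloneq\tilde{\Ga}/\Ga$ and on $C$ by the trivial action, to the covariant representation of $\tilde{\Ga}\ltimes_{\max}C$ that sends $g\in\tilde{\Ga}$ to $[\lambda_{\vp_n(g)}\otimes 1]_n$ and $c\in C$ to the diagonal $[1\otimes c]_n$ inside $(\mathrm{C}^*_\lambda(\Ga)\otimes_{\min}C)^\cU\subset\IB(\ell_2\Ga\otimes\cH_C)^\cU$; these commute, so this is a genuine covariant representation. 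The required $\tilde{\Ga}$-equi\-variant representation of $C(\bar{\Tree})$ into the commutant of $C$ is produced exactly as in the proof of Theorem~\ref{thm:main}, now via $C(X_\Tree)\ni[f_n]_n\mapsto[f_n\otimes 1]_n$ followed by the $\sigma$-normal extension to $B(X_\Tree)$, and this image commutes with $\{[1\otimes c]_n\}$. Theorem~\ref{thm:repn} then reduces the continuity in question to that of the restriction to the stabilizer $\Ga$ of the origin $o\in\Tree$, i.e.\ to continuity of the canonical (diagonal) map $\mathrm{C}^*_\lambda(\Ga)\otimes_{\min}C\to(\mathrm{C}^*_\lambda(\Ga)\otimes_{\min}C)^\cU$, which is a tautology. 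Taking $C=\IC$ recovers Theorem~\ref{thm:main}, so $\mathrm{C}^*_\lambda(\Ga)\subset\mathrm{C}^*_\lambda(\tilde{\Ga})\hookrightarrow\mathrm{C}^*_\lambda(\Ga)^\cU$ is completely existential and $\Ga$ is completely $\mathrm{C}^*$-selfless.

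For the second statement we follow the proof of Theorem~\ref{thm:tensor}, with $i\in\{1,2\}$ by the complete analogue of Theorem~4.1 in \cite{robert}. Since $(A_i,\vp_i)$ is completely selfless, the embedding $(A_i,\vp_i)\subset(A_i,\vp_i)\ast(\mathrm{C}^*_\lambda(\bF_\infty),\tau)\eqcolon(B_i,\psi_i)$ is completely existential, and tensoring gives that $(A_1\otimes A_2,\vp_1\otimes\vp_2)\subset(B_1\otimes B_2,\psi_1\otimes\psi_2)$ is completely existential, \emph{without} any exactness. As in the proof of Theorem~\ref{thm:tensor} one has $(B_1\otimes B_2,\psi_1\otimes\psi_2)\supset(A\rtimes_{\mathrm{r}}\bF_\infty,\vp)$ for the diagonal $\bF_\infty\subset\bF_\infty^{(1)}\times\bF_\infty^{(2)}$, where $(A,\vp)\coloneq\bigast_{\bF_\infty^{(1)}}(A_1,\vp_1)\otimes\bigast_{\bF_\infty^{(2)}}(A_2,\vp_2)$ and $A_1\otimes A_2\subset A\subset A\rtimes_{\mathrm{r}}\bF_\infty$; hence both $(A_1\otimes A_2,\vp_1\otimes\vp_2)\subset(A,\vp)$ and $(A_1\otimes A_2,\vp_1\otimes\vp_2)\subset(A\rtimes_{\mathrm{r}}\bF_\infty,\vp)$ are completely existential as intermediate embeddings, and by the Lemma above so is $(A_1\otimes A_2,\vp_1\otimes\vp_2)\ast(\cC_1,\tau)\subset(A,\vp)\ast(\cC_1,\tau)$. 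The verification, via Voiculescu's inequality, that the diagonal $\bF_\infty$-action on $(A,\vp)$ satisfies the hypothesis of Theorem~\ref{thm:crossedproduct} is unchanged. Assuming the completely existential upgrade of Theorem~\ref{thm:crossedproduct} described below---that $(A,\vp)\ast(\cC_1,\tau)\hookrightarrow(A\rtimes_{\mathrm{r}}\bF_\infty,\vp)^\cU$ is locally liftable---one composes it with the locally liftable ultrapower of $(A_1\otimes A_2,\vp_1\otimes\vp_2)\subset(A\rtimes_{\mathrm{r}}\bF_\infty,\vp)$ to conclude that $(A_1\otimes A_2,\vp_1\otimes\vp_2)\subset(A_1\otimes A_2,\vp_1\otimes\vp_2)\ast(\cC_1,\tau)$ is completely existential; the complete analogues of Lemmas~1.2 and~2.4 and of Theorem~2.6 in \cite{robert} then yield that $A_1\otimes A_2$ is completely selfless. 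Closure under direct products is immediate, since $\mathrm{C}^*_\lambda(\Ga_1\times\Ga_2)\cong\mathrm{C}^*_\lambda(\Ga_1)\otimes\mathrm{C}^*_\lambda(\Ga_2)$ with the canonical traces.

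The main obstacle is the completely existential upgrade of Theorem~\ref{thm:crossedproduct}. In the group case a coefficient algebra slid into Theorem~\ref{thm:repn} harmlessly, but the state $\vp$ genuinely enters the proof of Theorem~\ref{thm:crossedproduct} through the Toeplitz--Pimsner algebra $\cT_{A,\vp}$ over the Hilbert $A$-mod\-ule $\cspan ATA$. The plan is to re-run that proof with $\vp$ replaced by an $A_0$-valued conditional expectation $E\colon A\to A_0$---the case $A=A_0\otimes C$, $E=\vp_0\otimes\id_C$ being exactly what is needed---building the relative Toeplitz--Pimsner algebra over $\cspan ATA$ with $E$ in place of $\vp$ and checking, as before, that the explicit representation on $L^2(A,E)\otimes\ell_2(\bF_\infty)$ realizes $(A,E)\ast(\cC_1,\tau)$ as a point-norm limit of the $\ast$-homo\-morphisms $\cC_1\ni f(s)\mapsto f(s_n)\in A\rtimes_{\mathrm{r}}\bF_\infty$ amalgamated over $A$, these maps furnishing the required local lifts. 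This is routine but notation-heavy, and I expect it, rather than any of the bookkeeping above, to be where the real work lies.
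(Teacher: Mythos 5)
Your proposal is correct and matches the paper's (very terse) intended proof: the paper likewise just re-runs Theorems~\ref{thm:main} and~\ref{thm:tensor} with an auxiliary coefficient algebra, using Theorem~\ref{thm:repn} with a trivial coefficient action for the group case and the stability of completely existential embeddings under tensor products and reduced free products for the tensor case. The one item you flag as outstanding ``real work''---the coefficient upgrade of Theorem~\ref{thm:crossedproduct}---is precisely the content of Lemma~\ref{lem:toeplitz} as stated in this section (Shlyakhtenko's $A$-valued semicircular systems, i.e.\ the Toeplitz--Pimsner universality with coefficients), so rather than re-deriving a conditional-expectation version you need only apply that universality to the isometry $[c\otimes T_n]_n\in(C\otimes(A\rtimes_{\mathrm{r}}\bF_\infty))^\cU$.
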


The proof is same as Theorems~\ref{thm:main} 
and \ref{thm:tensor}, but uses the following fact 
(\cite{shlyakhtenko} or 
Example 4.6.11 and Exercise 4.8.1 in \cite{bo}).

\begin{lem}\label{lem:toeplitz}
Let $(A,\vp)$ be a $\mathrm{C}^*$-prob\-a\-bility space, 
$(\cT,\omega)$ be the Toeplitz $\mathrm{C}^*$-prob\-a\-bility space, 
and $C$ be a unital $\mathrm{C}^*$-alge\-bra. 
Then $C\otimes ((A,\vp)\ast(\cT,\omega))$ is 
the universal $\mathrm{C}^*$-alge\-bra generated by 
$C\otimes A$ and an isometry $T$ 
(the generator of the Toeplitz algebra) that satisfies 
$T^*(c\otimes a)T=\vp(a)(c\otimes1)$ for $a\in A$ and $c\in C$. 
Moreover, the free product state $\psi$ on $(A,\vp)\ast(\cT,\omega)$ 
is the unique state that satisfies $\psi|_A=\vp$ and $\psi(aTT^*a^*)=0$ for $a\in A$.
\end{lem}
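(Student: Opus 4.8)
The plan is to recognize $C\otimes((A,\vp)\ast(\cT,\omega))$ as the Toeplitz--Pimsner algebra of an explicit Hilbert module, exactly as in the proof of Theorem~\ref{thm:crossedproduct} but carrying along the tensor factor $C$. First I would recall that, by the identification of the reduced free product with a Toeplitz--Pimsner algebra (Example 4.6.11 in \cite{bo}, already used above), $(A,\vp)\ast(\cT,\omega) \cong \cT_{A,\vp}$, the Toeplitz--Pimsner algebra over the Hilbert $A$-module $\cspan ATA \cong A$ with left action twisted by $\vp$; equivalently it is the universal unital $\mathrm{C}^*$-alge\-bra generated by a copy of $A$ and an isometry $T$ subject to $T^*aT=\vp(a)1$ for $a\in A$. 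Tensoring with $C$, the algebra $C\otimes \cT_{A,\vp}$ is then the Toeplitz--Pimsner algebra over the Hilbert $(C\otimes A)$-module $C\otimes(\cspan ATA)$, with left $C\otimes A$-action $(c\otimes a)\cdot(c'\otimes aT a') = cc'\otimes (a(\vp(\cdot)) \text{-twisted})$; its universal property is precisely that it is generated by $C\otimes A$ together with an isometry $T$ (namely $1_C\otimes T$) satisfying $T^*(c\otimes a)T = \vp(a)(c\otimes 1)$. The key point making this legitimate is that the minimal tensor product of a $\mathrm{C}^*$-alge\-bra with a Toeplitz--Pimsner algebra is again the Toeplitz--Pimsner algebra of the corresponding tensored-up correspondence — this is the content of Exercise 4.8.1 in \cite{bo}, which is cited.

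For the universal-property claim I would argue directly: given any unital $\mathrm{C}^*$-alge\-bra $D$, a unital homomorphism $C\otimes A\to D$ and an isometry $S\in D$ with $S^*(c\otimes a)S=\vp(a)(c\otimes1)$, one checks that the relation $S^*(c\otimes1)S = c\otimes 1$ forces $S$ to commute with the image of $C\otimes 1$ (since $S^*(c\otimes1)S=(c\otimes1)$ and $S$ is an isometry, a standard computation with $\|(c\otimes1)S - S(c\otimes1)\|^2$ using $S^*S=1$ gives commutation), so $S$ together with the image of $A\otimes 1$ generates a homomorphic image of $\cT_{A,\vp}=\;(A,\vp)\ast(\cT,\omega)$ commuting with $C\otimes 1$, whence the pair extends to $C\otimes((A,\vp)\ast(\cT,\omega))\to D$. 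Uniqueness of the extension is automatic from the universal properties of the tensor factors.

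For the last sentence, the state $\psi$ on $(A,\vp)\ast(\cT,\omega)$ is the free product state, which by definition restricts to $\vp$ on $A$ and to $\omega$ on $\cT$; in particular $\psi(TT^*)=\omega(TT^*)=1-\omega(T^*T)=0$ since $\omega$ is the vacuum state and $T$ the unilateral shift, so $\psi(aTT^*a^*)=0$ for all $a\in A$ by Cauchy--Schwarz together with $\psi(TT^*)=0$. Conversely, suppose $\psi'$ is any state with $\psi'|_A=\vp$ and $\psi'(aTT^*a^*)=0$ for $a\in A$. The condition $\psi'(TT^*)=0$ (take $a=1$) together with positivity implies $TT^*$ lies in the GNS-kernel of $\psi'$, so in the GNS representation the image of $T$ is an isometry whose range projection $TT^*$ is annihilated by the cyclic vector; combined with $T^*(c\otimes a)T=\vp(a)$ this pins down all moments $\psi'((c_0\otimes a_0)T^{k}(c_1\otimes a_1)T^{*m}\cdots)$ by repeatedly pushing $T^*$ to the right past the $A$-factors (using the covariance relation) until it either annihilates the vector or produces scalars $\vp(a_i)$ — this is exactly the computation showing that the conditional expectation $\cT_{A,\vp}\to A$ is given by the vacuum, hence the moments, hence $\psi'$, are uniquely determined and equal to $\psi$.

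The main obstacle I expect is not any single step but making the identification "$C\otimes(\text{Toeplitz--Pimsner over }E) \cong \text{Toeplitz--Pimsner over }C\otimes E$" clean at the level of universal properties rather than just isomorphism of algebras — in particular verifying that an abstract isometry satisfying $T^*(c\otimes a)T=\vp(a)(c\otimes1)$ automatically commutes with $C\otimes 1$, which is what lets one reduce to the known $C=\mathbb{C}$ case; everything else is bookkeeping with the cited results (\cite{shlyakhtenko}, Example 4.6.11 and Exercise 4.8.1 in \cite{bo}).
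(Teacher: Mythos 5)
The paper itself gives no proof of this lemma (it is quoted from \cite{shlyakhtenko} and from Example 4.6.11 and Exercise 4.8.1 in \cite{bo}), and your unpacking follows the intended route: identify $(A,\vp)\ast(\cT,\omega)$ with the Toeplitz--Pimsner algebra $\cT_{A,\vp}$ of the correspondence $X=\cspan ATA$, as in the proof of Theorem~\ref{thm:crossedproduct}, and tensor up. Your commutation computation is correct: from $S^*(c\otimes1)S=c\otimes1$ and $S^*S=1$, expanding $\bigl((c\otimes1)S-S(c\otimes1)\bigr)^*\bigl((c\otimes1)S-S(c\otimes1)\bigr)$ gives $0$. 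The uniqueness-of-state argument is also correct: $\psi'(aTT^*a^*)=0$ forces $T^*a^*\hat1=0$ in the GNS representation, and since every word in $A\cup\{T,T^*\}$ reduces via $T^*aT=\vp(a)$ to the form $a_0Ta_1\cdots Ta_k(\cdot)b_mT^*\cdots T^*b_0$, all moments are pinned to those of the vacuum, i.e.\ of the free product state.

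The genuine gap is in your ``direct argument'' for the universal property. Knowing that $S$ commutes with the image of $C\otimes1$ and that $C^*(\rho(1\otimes A),S)$ is a quotient of $\cT_{A,\vp}$ produces a pair of \emph{commuting} homomorphisms, hence a homomorphism from $C\otimes_{\max}\cT_{A,\vp}$ into $D$; it does not show that this factors through $C\otimes_{\min}\cT_{A,\vp}$, which is what the lemma asserts and what the applications to complete existentialness need ($\cT_{A,\vp}$ is not nuclear for general $A$, so the two tensor products genuinely differ and the commutation trick alone cannot locate the universal algebra between them). So the reduction ``to the known $C=\mathbb{C}$ case'' does not go through as stated. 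What closes the gap is precisely the fact you cite in your first paragraph but then do not use: the universal algebra in question is the Toeplitz--Pimsner algebra of the tensored correspondence $C\otimes X$ over $C\otimes_{\min}A$ (your commutation lemma is what identifies the generated correspondence $\cspan(C\otimes A)T(C\otimes A)$ with $C\otimes X$), and the Fock representation of that universal algebra is faithful because the vacuum conditional expectation onto $C\otimes_{\min}A$ is non-degenerate --- the tensored analogue of Theorem 4.6.6 in \cite{bo}, exactly as invoked in the proof of Theorem~\ref{thm:crossedproduct}. Since the Fock module of $C\otimes X$ is the exterior tensor product of $C$ with the Fock module of $X$, the Fock image is visibly $C\otimes_{\min}\cT_{A,\vp}=C\otimes_{\min}\bigl((A,\vp)\ast(\cT,\omega)\bigr)$. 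With that step made explicit, your proof is complete.
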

%
%

We do not use it, but note the fact that 
$(A,\vp)\ast(\cT,\omega)$ is 
simple and purely infinite, provided that
$A \cap \IK(L^2(A,\vp))=0$ (\cite{kumjian}). 

\begin{thm}\label{thm:toeplitz}
Let $(A,\vp)$ be a $\mathrm{C}^*$-prob\-a\-bility space. 
Suppose that there are a $\mathrm{C}^*$-prob\-a\-bility 
space $(B,\psi)$ containing $(A,\vp)$, an ultrafilter $\cU$, 
and an isometry $T\in B^\cU$ that satisfies that 
$T^*aT=\vp(a)$ and $\psi^\cU(aTT^*a^*)=0$ for all $a\in A$ and 
that $(T+T^*)/2 \in A^\cU$. 
Then $(A,\vp)$ is completely selfless. 
\end{thm}
\begin{proof}
Let $(A_1,\vp_1) \subset (A_2,\vp_2)$ 
denote $(A,\vp)*(\cC_1,\tau)\subset (A,\vp)*(\cT,\omega)$, 
see Section~\ref{sec:dp}.
Let a $\mathrm{C}^*$-prob\-a\-bility space $(C,\theta)$ be given. 
We have to show there is an embedding 
\[
(C\otimes A_1,\theta\otimes\vp_1)
 \hookrightarrow((C\otimes A)^\cU,(\theta\otimes\vp)^\cU)
\] 
that extends the diagonal embedding of $C\otimes A$.
We view $T$ as $1_C\otimes T$ in $(C\otimes B)^\cU$.
Then by Lemma~\ref{lem:toeplitz}, the isometry $T$ gives rise to 
a representation of $C\otimes A_2$ in $(C\otimes B)^\cU$. 
Since the state $(\theta\otimes\psi)^\cU$ on $(C\otimes B)^\cU$ 
restricts to $\theta\otimes\vp_2$ on $C\otimes A_2$, 
this representation is a state-preserving embedding of 
$C\otimes A_2$ into $(C\otimes B)^\cU$. 
Since the generator $(T+T^*)/2$ of $\cC_1$ belongs to $A^\cU$, 
it restricts to a desired embedding of $C\otimes A_1$ into $(C\otimes A)^\cU$.
\end{proof}

\section{Proof of Theorem~\ref{thm:general}}

\begin{proof}[Proof of Theorem~\ref{thm:general}]
We deal with the first case. 
By Glimm's lemma, $\vp$ is approximated on 
any finite subset $F$ of $A$ by a pure state $\psi$ 
that are disjoint from $\vp$. 
By excision (see Proposition 11.4.2 in \cite{bo}), 
there is a net of projections $p_j$ such that 
$\| p_j a p_j - \psi(a)p_j \| \to 0$ for $a\in A$. 
The net $(p_j)_j$ converges in $A^{**}$ to the support 
projection of the pure state $\psi$. 
Hence $\vp(ap_ja^*)\to0$ for $a\in A$. 
Thus, by pure infiniteness, 
there are a directed set $I$ and a net $(T_i)_{i\in I}$ of isometries in $A$ such that 
$\|T_i^*a T_i-\vp(a)\| \to0$ and $\vp(aT_iT_i^*a^*)\to0$ 
for $a\in A$.
Let $\cU$ be a cofinal ultrafilter on $I$. 
The isometry $T \coloneq [T_i]_i$ in $A^\cU$ satisfies $T^*aT=\vp(a)$ and $\vp^\cU(aTT^*a^*)=0$ for $a\in A$. By Theorem~\ref{thm:toeplitz}, 
we are done.

We deal with the second case. 
Since $A$ is $\cZ$-stable and $\mathrm{C}^*_\lambda(\bF_\infty)\hookrightarrow \cZ^\cU$ (Theorem 4.1 in \cite{oz}), 
the embedding 
$A\subset A\otimes C^*_\lambda(\bF_\infty)$ 
is existential by exactness, 
where we identify $A$ with $A\otimes\IC1$.
Since $A$ is simple and monotracial, the \emph{uniform Dixmier 
property} holds by the Haagerup--Zsido theorem (\cite{hz}). 
Namely, there is a net $(F_i)_{i\in I}$ of finite families of unitary elements 
in $A$ that satisfies $|F_i|^{-1}\sum_{u\in F_i} uau^*\to\tau(a)$ for $a\in A$. 
By replacing $F_i$ with $F_i^*F_i$, we may assume that 
the finite set $F_i$ is closed under the adjoint operation. 
Let $\{ u^{(i)}_n : n=1,\ldots,|F_i| \}$ be an enumeration of $F_i$. 
Since 
$A\subset A\otimes \mathrm{C}^*_\lambda(\bF_\infty)$ is existential, 
by replacing $u^{(i)}_n$ with likenesses in $A$ of $u^{(i)}_n\otimes t_n$ 
in $A\otimes \mathrm{C}^*_\lambda(\bF_\infty)$, 
we may further assume by Voiculescu's inequality 
(see Proof of Theorem~\ref{thm:tensor}) that 
$\| |F_i|^{-1}\sum_{u\in F_i} uau\| \to0$ for $a\in A$. 
Let $\cU$ be a cofinal ultrafilter on $I$. 

Let's consider the Cuntz algebra $(\cO_\infty,\omega)$ 
generated by isometries $\{l_n\}$ with mutually orthogonal ranges, 
together with the vacuum state. 
Recall that  $\{ (l_n+l_n^*)/2\}$ generates 
the free semicircular system 
$(\cC,\tau) \subset (\cO_\infty,\omega)$ 
(see \cite{vdn}). 
The elements 
\[
T_i \coloneq (2|F_i|)^{-1/2}\sum_n (u^{(i)}_n+(u^{(i)}_n)^*) \otimes l_n
\]
in $A\otimes\cO_\infty$ satisfies $T_i^* a T_i \to \tau(a)$ and 
$(\tau\otimes\omega)(aT_iT_i^*a^*)=0$ for $a\in A$. Moreover, 
$(T_i+T_i^*)/2\in A\otimes\cC$. 
Hence by Lemma~\ref{lem:toeplitz}, it gives rise to an embedding 
of $(A,\tau)\ast(\cT,\omega)$ into $(A\otimes\cO_\infty,\tau\otimes\omega)^\cU$.
It restricts to an embedding of 
$(A,\tau)\ast(\cC_1,\tau)$ into $(A\otimes\cC,\tau\otimes\tau)^\cU$ 
that witnesses the \emph{relative existentialness} (see \cite{robert}) 
of $(A,\tau)\hookrightarrow(A,\tau)\ast(\cC_1,\tau)$ 
in $(A\otimes\cC,\tau\otimes\tau)$. 
Since the embedding of $(A,\tau)$ into $(A\otimes\cC,\tau\otimes\tau)$ 
is existential by assumption, it follows that the embedding of $(A,\tau)$ into 
$(A,\tau)\ast(\cC_1,\tau)$ is also existential. 
\end{proof}
\section{The Powers property and selflessness}\label{sec:pow}
$\mathrm{C}^*$-simplicity of the free group of rank $2$ 
has been proved by R.~T.~Powers (\cite{powers}) by a combinatorial 
method. The combinatorial method is later formalized by 
P.~de la Harpe (\cite{delaharpe}) as 
the \emph{Powers property}. 
The following is very close to it in spirit. 
It is designed to allow the Haagerup--Pisier 
type decomposition (\cite{hp}).

\begin{defn}\label{defn:pow}
We say that a group $\Ga$ has 
property $\mathrm{P}_{\mathrm{PHP}}$ if 
for every finite subset $F\subset\Ga$ and $\ve>0$, 
there is $N\in\IN$ such that for every $n\geq N$, 
there are 
$t_i\in\Ga$ and $C_i\subset D_i\subset\Ga$ for 
$1\le i \le n$ satisfying that 
the members of 
\[
\{aC_i : a\in F,\, i=1,\ldots,n\}\cup\{ bt_j^{-1}(\Gamma\setminus D_j) : b\in F,\, j=1,\ldots,n\}
\] 
are mutually disjoint and that 
\[
\sup_{x\in\Ga} |\{ i : x\in D_i \} \cup \{ j : x \in t_j^{-1} (\Ga\setminus C_j)\}| \le \ve n^{1/2}.
\]
\end{defn}

It is not too hard to see that the class of groups 
with $\mathrm{P}_{\mathrm{PHP}}$ is closed 
under the direct product; for $t_i^{(k)}\in\Ga^{(k)}$ and 
$C_i^{(k)}\subset D_i^{(k)}\subset\Ga^{(k)}$, $k=1,2$, 
set $t_i \coloneq (t_i^{(1)},t_i^{(2)})\in \Ga^{(1)}\times \Ga^{(2)}$, 
$C_i \coloneq C_i^{(1)}\times C_i^{(2)}$, and 
$D_i \coloneq (D_i^{(1)}\times \Ga^{(2)})\cup(\Ga^{(1)}\times D_i^{(2)})$. 

\begin{thm}
A group $\Ga$ with property $\mathrm{P}_{\mathrm{PHP}}$ is completely $\mathrm{C}^*$-selfless. 
\end{thm}
\begin{proof}
Given a finite subset $F\subset\Ga$ and $\ve>0$,  
we take $t_i$ and $C_i\subset D_i\subset\Ga$ for $1\le i \le n$ 
as in Definition.
Let $P_i \le Q_i$ denote the diagonal projections 
in $\IB(\ell_2\Ga)$ corresponding to the subsets $C_i\subset D_i$ and 
set $P_i' \coloneq t_i^{-1}Q_i^\perp t_i$ and 
$Q_i' \coloneq t_i^{-1}P_i^\perp t_i$.
Note that $\{ P_i \}\cup\{ P'_j\}$ are mutually orthogonal. 
Here $\Ga$ is represented in $\IB(\ell_2\Ga)$ 
by the left regular representation. 
We denote by $\omega$ the vector state on $\IB(\ell_2\Ga)$ 
corresponding to $\delta_1$, 
which satisfies $\omega|_{\mathrm{C}^*_\lambda(\Ga)}=\tau$. 
We define $T\in\IB(\ell_2\Ga)$ by 
\[
T\coloneq (2n)^{-1/2}\sum_i (P_i t_i + t_i^{-1} Q_i^\perp)
= (2n)^{-1/2}\sum_i (P_i t_i + P'_i t_i^{-1} ).
\]
The element $T$ is an almost isometry as
\[
T^*T
 = (2n)^{-1} \sum_i (t_i^{-1}P_i t_i  + t_i P_i' t_i^{-1})
 = 1-(2n)^{-1}\sum_i (Q_i'+Q_i)
 \approx_{\ve n^{-1/2}} 1.
\]
Moreover, $T^*aT = 0 = \tau(a)$ for every $a\in F\setminus\{1\}$. 
One has by Schur's test
\begin{align*}
\| \sum_i (Q_i-P_i)t_i \| 
 &= \| \sum_i (Q_i-P_i)t_i(Q_i'-P_i') \| 
 \le \|\sum_i Q_i\|^{1/2}\|\sum_i Q'_i\|^{1/2}
 \le\ve n^{1/2}
\end{align*}
and hence 
\[
\| T+T^* - (2n)^{-1/2}\sum_i (t_i+t_i^*) \|
 \le 2(2n)^{-1/2}\| \sum_i (Q_i-P_i)t_i \|
 \le 2\ve.
\]
Also, for every finite subset of $\Ga$, the corresponding 
diagonal projection $R\in\IB(\ell_2\Ga)$ satisfies 
$\| RT\|^2 \le (2n)^{-1} \|\sum_i P_i + P_i' \| \|\sum_i t_i^{-1}Rt_i + t_i R t_i^{-1}\| 
\le n^{-1}\rank R$. 
It follows that a suitable ultra-limit $\tilde{T} \in \IB(\ell_2\Ga)^\cU$ 
of $T$ as above verifies the condition 
of Theorem~\ref{thm:toeplitz} 
for $(\mathrm{C}^*_\lambda(\Ga),\tau)\subset(\IB(\ell_2\Ga),\omega)$ 
and hence $(\mathrm{C}^*_\lambda(\Ga),\tau)$ is completely selfless.
\end{proof}

\begin{prop}\label{prop:dlh}
Let $\Ga$ be a group. Assume that there
is a continuous action of $\Ga$ on a Hausdorff topological space 
that is minimal, extremely proximal, and topologically free. 
Then $\Ga$ has property $\mathrm{P}_{\mathrm{PHP}}$.
\end{prop}
\begin{proof}
We sketch the proof by adopting that 
of Lemma~4 in \cite{delaharpe}. 
Let a finite subset $F\subset\Ga$ 
and $\ve>0$ be given. Take $n \geq \ve^{-2}$. 
Then, there are $t_1,\ldots,t_n$ in $\Ga$ and 
mutually disjoint open subsets $U_i^\pm$ 
such that $t_i (L\setminus U_i^-)\subset U_i^+$ for every $i$. 
Moreover, we may assume that 
$a(U_i^+\cup U_i^-)\cap (U_j^+\cup U_j^-)=\emptyset$ 
for every $a\in F\setminus\{1\}$ and every $i,j$.
We fix a point $x_0\in L$ and set 
$C_i \coloneq D_i \coloneq \{ s\in \Ga : sx_0\in U_i^+\}$. 
They witness $\mathrm{P}_{\mathrm{PHP}}$.
\end{proof}

Acylindrically hyperbolic groups with trivial 
finite radical satisfy the assumption of 
Proposition~\ref{prop:dlh} (Proposition~0.3 in \cite{ad}). 
Hence it removes property RD assumption from 
Amrutam et al.'s theorem (\cite{agkep}), 
see also \cite{yang} for an alternative proof. 
However, unlike Amrutam et al.\ and Yang's 
theorem (\cite{agkep} \cite{yang}), 
it does not provide an element in $\Ga^\cU$ 
that is \emph{strongly free} from $\Ga$. 
The same comment applies to Theorem~\ref{thm:main} 
and the following proposition that 
partly generalizes Vigdorovich's theorem (\cite{vigdorovich}).

\begin{prop}
Every Zariski-dense subgroup $\Ga\le\mathrm{PSL}(d\geq2,\IR)$ 
satisfies property $\mathrm{P}_{\mathrm{PHP}}$.
\end{prop}
\begin{proof}
We follow the proof of Proposition~1 in \cite{bch}. 
Let $B:=\IP^{d-1}(\IR)$ denote the homogeneous space of 
$G \coloneq \mathrm{PSL}(d,\IR)$, 
on which $\Ga$ acts continuously. 
Take $t_0\in\Ga$ with eigenvalues 
$\lambda_1>\lambda_2 > \cdots >\lambda_n>0$ and 
denote by $[\xi_1 : \cdots : \xi_n]$ the corresponding 
homogeneous coordinates on $B$. 
We set $p \coloneq [1 : 0 : \cdots : 0] \in K\coloneq\{ [\xi_1 : \cdots : \xi_n] : \xi_n = 0\}$ and $p' \coloneq [0 : \cdots : 0 : 1] \in K'\coloneq\{ [\xi_1 : \cdots : \xi_n] : \xi_1 = 0\}$.
Then for every $x\in B\setminus\{p\}$, 
every limit point of $t_0^{-n} x$ is in $K'$; 
and for every $x\in B\setminus K$, 
one has $\lim_n t_0^{-n}x = p'$. 

Let a finite subset $F\subset\Ga$ and $\ve>0$ be given. 
We claim that for every $n$ 
there are $s_1,\ldots,s_n\in\Ga$ such that 
the members of 
\[
\{ a s_i p : a \in F, i=1,\ldots,n \}\cup \{ b s_j p' : b \in F, j=1,\ldots,n \}
\]
are mutually distinct and that the hyperplanes 
$\{ s_i K \}$ (resp.\ $\{ s_i K'\}$) are in general position,
i.e., for every $I\subset\{1,\ldots,n\}$  
the linear dimension of $\bigcap_{i\in I} s_iK$ 
is $\max\{d-|I|,0\}$ (resp.\ likewise).
The proof is by induction on $n$. 
Suppose that the claim holds true for $n-1$ 
and we are given $s_1,\ldots,s_{n-1}\in\Ga$ that meet the claim.
For each $a\in\Ga$ and $q,q'\in B$ (which may be the same), 
the subset 
$\{ s\in\Ga : sq\neq aq'\}$ is non-empty and Zariski-open. 
For each $a\in\Ga\setminus\{1\}$ 
and $q,q'\in B$, the subset 
$\{ s\in\Ga : sq\neq asq'\}$  
is also non-empty and Zariski-open. 
For example, suppose by contradiction 
that $sq=asq'$ for all $s\in\Ga$. 
Then $q=aq'$ and $sq=asq'=asa^{-1}q$ 
for all $s\in G$ 
by Zariski-density of $\Ga$ in $G$, 
but since $\bigcup_s s^{-1}G_q s = G$, 
where $G_q$ is the stabilizers of $q$, 
this implies that $a$ acts as identity on $B$, meaning that $a=1$. 
Also for each proper subspaces $L$ and $L'$ 
the subset $\{ s\in\Ga : L' \not\subset sL\}$ 
is non-empty and Zariski-open. 
Since a finite intersection of 
non-empty Zariski-open subsets is non-empty, 
there is $s_n$ such that $s_1,\ldots,s_n$ meet the claim. 

We continue the proof and let $s_1,\ldots,s_n$ 
be as in the previous claim. 
We choose neighborhoods 
$U_i$ of $s_ip$ and $V_i$ of $s_iK$ with $U_i\subset V_i$
and likewise $U_i'\subset V_i'$ that satisfy that 
\[
\{ a U_i : a \in F, i=1,\ldots,n \}\cup \{ b U_j' : b \in F, j=1,\ldots,n \}
\]
are mutually disjoint and that 
every $I\subset\{1,\ldots,n\}$ 
with cardinality $d$ satisfies 
$\bigcap_{i\in I} V_i = \emptyset$ and $\bigcap_{i\in I} V_i' = \emptyset$.
%
%
Then by compactness, for each $i$ 
there is $n_i\in\IN$ large enough so that 
$t_i\coloneq s_it_0^{n_i}s_i^{-1}$ satisfies 
$t_i^{-1}(B\setminus U_i)\subset V_i'$ and 
$t_i^{-1}(B\setminus V_i)\subset U_i'$. 
We further choose $x_0\in B$ and 
set $C_i\coloneq\{ g \in \Ga : gx_0 \in U_i \}$ 
and $D_i\coloneq\{ g\in\Ga : gx_0\in V_i\}$. 
Thus, for $n$ large enough, 
$t_1,\ldots,t_n$ and $C_i\subset D_i$ verify
$\mathrm{P}_{\mathrm{PHP}}$ with 

\begin{minipage}{.95\textwidth}
\[
\sup_{x\in\Ga} |\{ i : x\in D_i \} \cup \{ j : x \in t_j^{-1} (\Ga\setminus C_j)\}| \le 2(d-1) \le \ve n^{1/2}.
\]\end{minipage}
\end{proof}

\end{document}